\newtheorem{lemma}{Lemma}
\newtheorem{proposition}{Proposition}
\theoremstyle{definition}
\newtheorem{example}{Example}
\journal{\hspace{1cm}}
\begin{document}

\begin{frontmatter}

\title{Analysis of infected population threshold exceedance in an SIR epidemiological model}


\author[1]{Andrés David Báez Sánchez\corref{cor1}}
\ead{adsanchez@utfpr.edu.br}
\author[1]{Nara Bobko}
\ead{narabobko@utfpr.edu.br}
\cortext[cor1]{Corresponding author}

\address[1]{Department of Mathematics, Federal University of Technology, Av. Sete de Setembro, 3165, 80230-901, Curitiba, Paraná, Brazil.}

\begin{abstract}
   We consider an epidemiological SIR model and a positive threshold $M$. Using a parametric expression for the solution curve of the SIR model and the Lambert W function, we establish necessary and sufficient conditions on the basic reproduction number $\mathcal{R}_0$ to ensure that the infected population does not exceed the threshold $M$. We also propose and analyze different measures to quantify a possible threshold exceedance.
\end{abstract}

\begin{keyword}
SIR Epidemiological Model, Infected Population Threshold, Lambert W Function.
\MSC[2020] 92D30
\end{keyword}

\end{frontmatter}

\section{Introduction}
\label{sec.intro}

In the context of the recent COVID-19 outbreak, much attention has been given to the idea of \textit{flattening the curve} of infection (Figure~\ref{Figure0}), to reduce the harmful effects of an overloaded medical system~\cite{TWP,CBS,DMUK}. When the number of active cases of a given infectious disease is over the healthcare capacity, it is expected to observe a rise in the disease mortality rate, as well as an increase in other diseases mortality, because of a general decrease in the medical care quality.

\begin{figure}[!ht]
    \center
    \includegraphics[width=0.7\textwidth]{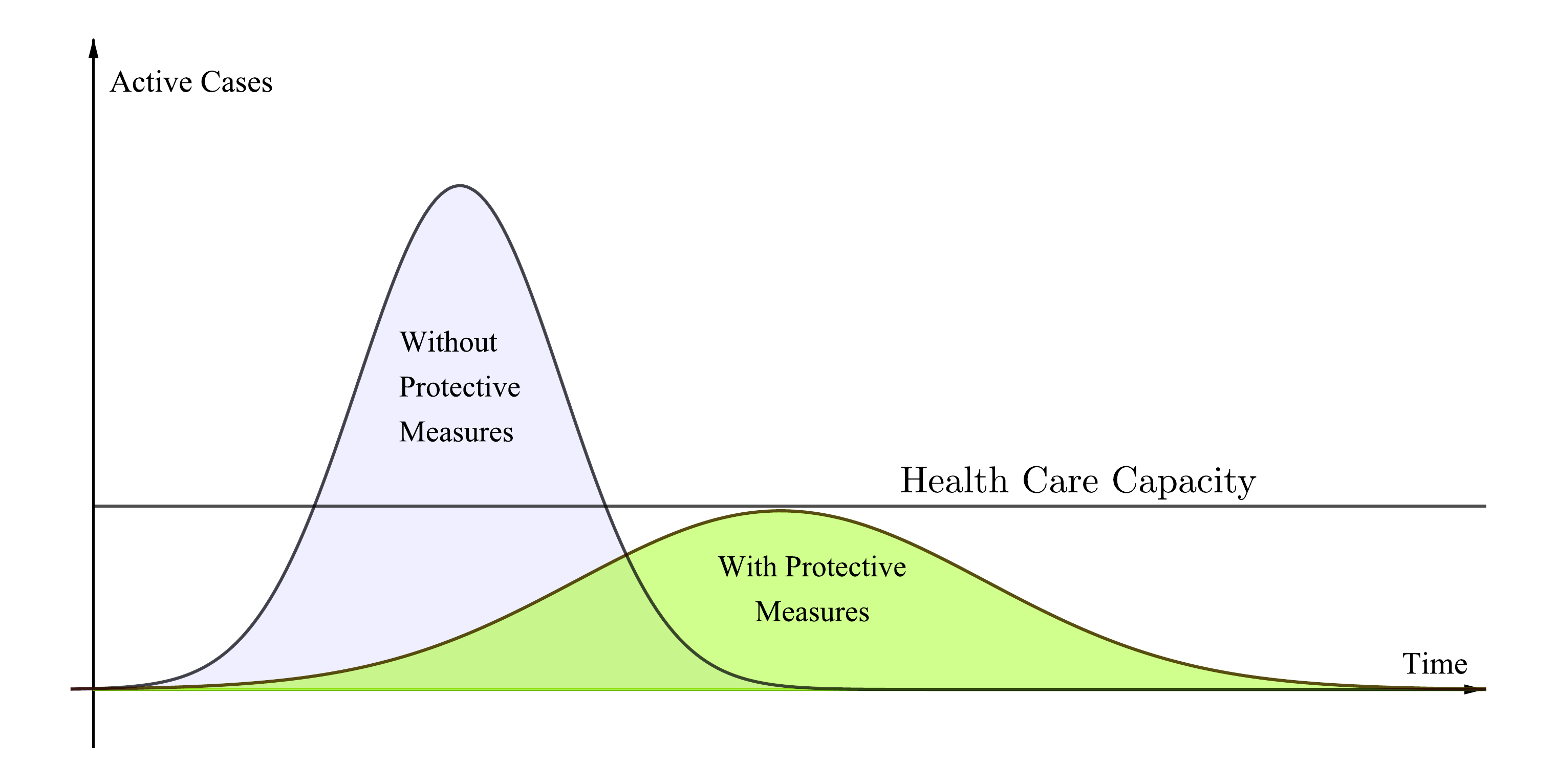}
    \caption{Flattening the curve.}
    \label{Figure0}
\end{figure}

It is desirable to determine specific conditions to ensure that the health care capacity will not be exceeded, and if such a situation is inevitable, it would be useful to  effectively quantify the negative impact produced. 

In this work, we consider an epidemiological SIR model and a positive threshold $M$. We establish necessary and sufficient conditions on the reproduction number $\mathcal{R}_0$ to ensure that the infected population does not exceed the threshold $M$, and also propose and discuss five different measures to quantify the impact of such exceeding.

If one considers that at a given instance, the negative impact is proportional to the current level of threshold exceeding, then, the area under the infected curve and over the threshold may be considered as a possible measure of the total negative impact on the healthcare system (Figure~\ref{Figure1}).

\begin{figure}[!ht]
    \center
    \includegraphics[width=0.7\textwidth]{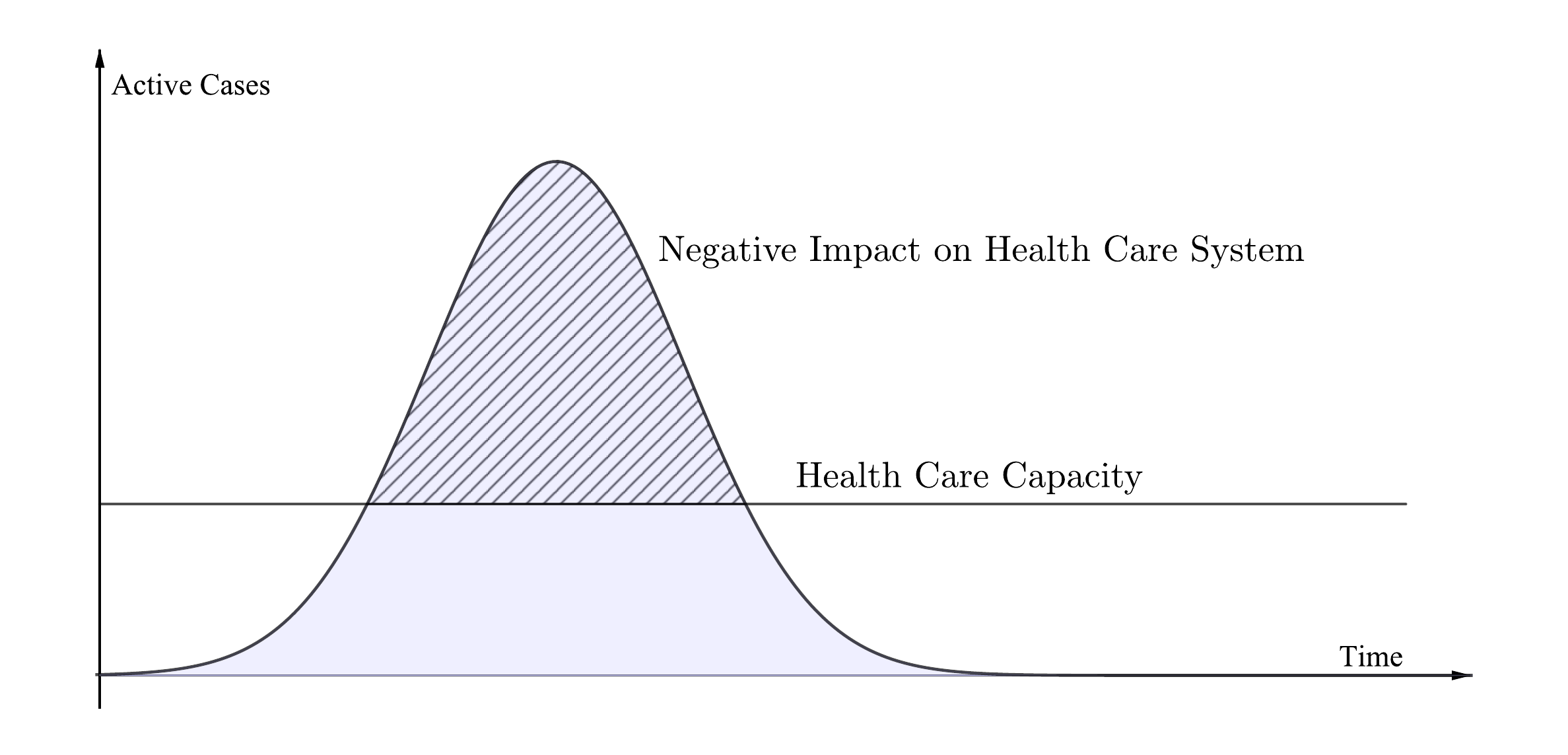}
    \caption{Impact on Health Care System.}
    \label{Figure1}
\end{figure}

This kind of measure will be considered, as well as four other quantifiers that we propose as possible measures for threshold exceeding quantification.

Along this paper, we will use an exact parametric solution for the SIR model, based on the work of Harko \textit{et.al.} in~\cite{HARKO2014184}, and will also use the Lambert W function, also known as the product logarithm function~\cite{Corless1996}. Previous works have considered the Lambert W function in the context of epidemiological models. In~\cite{RELUGA2004249,Wang2010,Pakes2015} for example, the Lambert W function is used to express the final sizes in different epidemiological models and in~\cite{Xiao2013}, the Lambert W function is used to manipulate and analyze an epidemiological model with piece-wise smooth incidence rate. Additional details and applications of the Lambert W function can be found in~\cite{Corless1996,Lehtonen2016}.

The rest of this work is organized as follows: Section~\ref{sec.model} briefly recall the epidemiological SIR model and present a parametric solution according to the results in~\cite{HARKO2014184}. Section~\ref{sec.Imax} considers the control of the maximum value of the infected population. We establish conditions on the reproduction number to ensure that the maximum infected value does not exceed $M$. In Section~\ref{sec.quantifying} we propose and analyze five different measures to quantify the impact caused by a possible threshold exceeding. Final considerations are presented in the Section~\ref{sec.finalcomments}. Appendix A, includes the main results related to the Lambert W function used along the work.

\section{An epidemiological SIR model an its parametric solution} \label{sec.model}

The main idea behind SIR models is to consider that a population $N$ is divided into three disjoint categories or compartments: susceptible individuals, infected individuals, and removed individual (recovered or deceased individuals), denoted by $S$, $I$ and $R$ respectively, so $N=S+I+R$.

We consider the following SIR model,
\begin{equation}
    \begin{split}
    \dfrac{dS}{dt} &= -\beta\,S\,I\\
    \dfrac{dI}{dt} &= \beta\,S\,I-\gamma I\\
    \dfrac{dR}{dt} &= \gamma \, I.\\
    \end{split}
    \label{model1} 
\end{equation}

The positive real numbers $\beta$, and $\gamma$ are interpreted as the infection rate and recovery rate respectively. Note that adding equations in~\eqref{model1}, we can obtain $$\dfrac{dN}{dt} = \dfrac{dS}{dt} + \dfrac{dI}{dt} + \dfrac{dR}{dt} = 0.$$
Thus, we can consider  $N(t)=S(t) + I(t) + R(t)$ constant for all $t$. Letting $\mathcal{R}_0=\frac{\beta N}{\gamma}$, we can rewrite~\eqref{model1} as

\begin{equation}
    \begin{split}
    \dfrac{dS}{dt} &= -\frac{\gamma\,\mathcal{R}_0\,S\,I}{N}\\
    \dfrac{dI}{dt} &= \frac{\gamma\,\mathcal{R}_0\,S\,I}{N}-\gamma I\\
    \dfrac{dR}{dt} &= \gamma \, I.\\
    \end{split}
    \label{model2} 
\end{equation}

The parameter $\mathcal{R}_0$ is called the basic reproduction number and has fundamental role in the description of the equilibria stability in the classical SIR model~\cite{martcheva2015introduction}. 
The reproduction number $\mathcal{R}_0$ can be interpreted as the number of new cases that one case generates, on average, on a completely susceptible population. It represents a measure of the effectiveness of the infection.

There is not an exact analytical solution of the SIR model \eqref{model2} in terms of the parameter $t$, however, it is possible to obtain a parametric solution in terms of a new parameter $u$. Following the results obtained in~\cite{HARKO2014184}, a parametric solution for the model~\eqref{model2} can be written as

\begin{equation}
    \begin{split}
    S(u)&=x_0u\\
    I(u)&= \frac{N}{\mathcal{R}_0  }\ln{u}-x_0u+N\\
    R(u)&= -\frac{N}{\mathcal{R}_0 }\ln{u},\\
    \end{split}
    \label{solution2} 
\end{equation}

where $\displaystyle u=e^{ -\frac{\mathcal{R}_0}{N}R}$, and $x_0=S(0)e^{\frac{\mathcal{R}_0}{N}R(0)}$. 

\begin{figure}[!ht]
    \center
    \includegraphics[width=\textwidth]{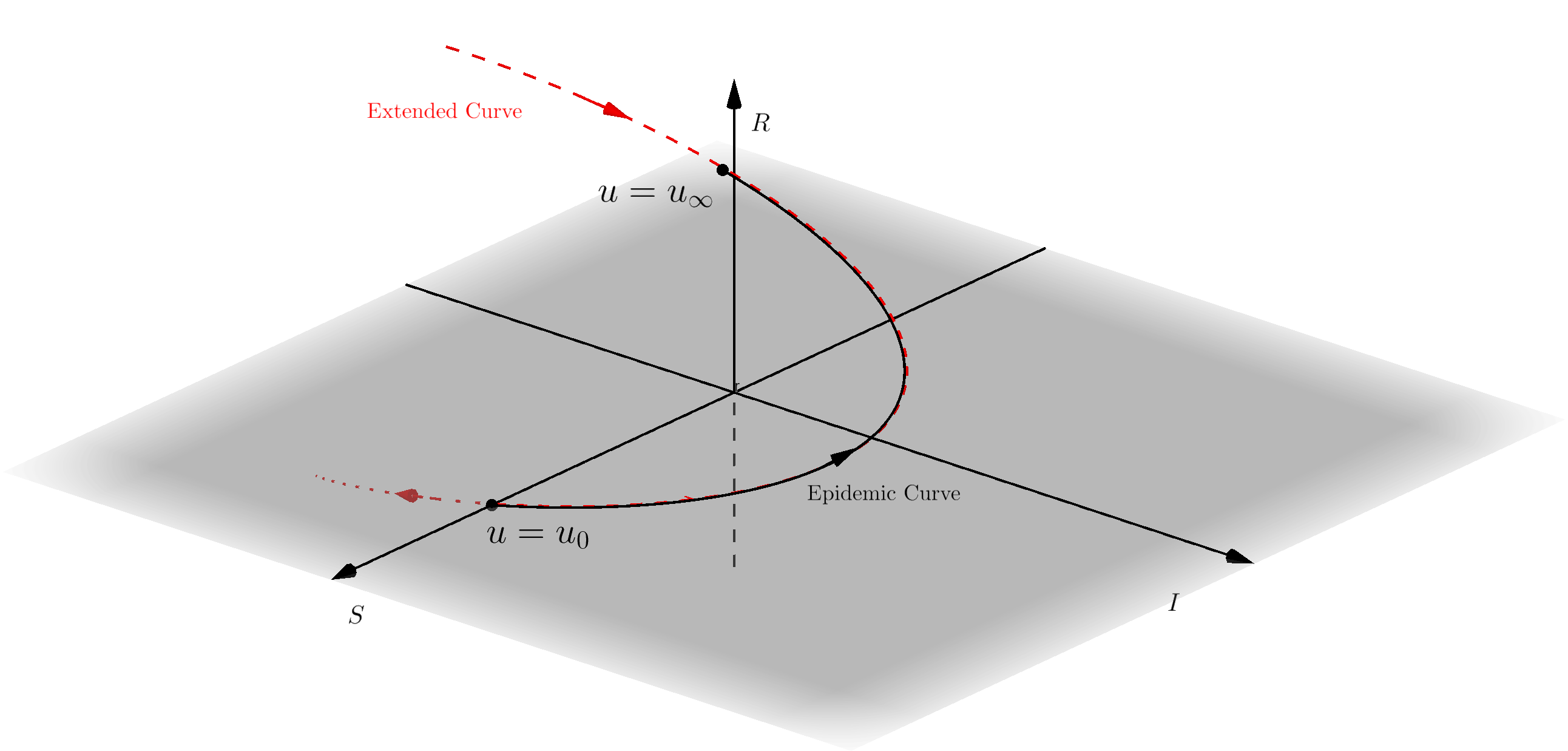}
    \caption{Parameter equations~\eqref{solution2} describe a curve (red) which contains the solution curve of the epidemiological SIR model (black) crossed in the time-opposite direction.}
    \label{Figure2}
\end{figure}
The parametric equations in~\eqref{solution2} describe a curve in $\mathbb R^3$, which, for some specific values of $u$, corresponds precisely to the solution curve of~\eqref{model2} (for details see~\cite{HARKO2014184}). In particular, to properly describe the evolution of the removed population $R$, the parameter $u$ should varies in such a way that $R$ goes from $R(0)$ to $R_\infty :=\lim_{t\to \infty}R(t)$, hence, from third equation in~\eqref{solution2}, we must have that $u$ varies from $u_0=e^{ -\frac{\mathcal{R}_0}{N}R(0)}$ to $u_\infty=e^{ -\frac{\mathcal{R}_0}{N}R_\infty}$.

Note that $u$ was defined as a decreasing function of $R$ and note also that $R$ is an increasing function of $t$. Thus, the parameter $u$ must decrease to describe the solution curve of~\eqref{model2} following the \textit{time-forward direction}.  In fact, as $0\leq R(0)\leq R_\infty\leq N$, we have that $e^{-\mathcal{R}_0}\leq u_\infty \leq u_0\leq 1$.

If one allow $u$ to varies on the interval $(0,\infty)$, then equations~\eqref{solution2} describe an extended curve which also contains the solution curve of model~\eqref{model1}, but crossed in the time-opposite direction. This occurs precisely when $u$ varies in the interval $[u_\infty, u_0] \subset (0,\infty)$  (Figure~\ref{Figure2}).

\section{Controlling the maximum number of infected} 
\label{sec.Imax}

Let $M$ be a positive constant such that $M<N$. We will consider $M$ as an epidemiological threshold which, ideally, must not be exceeded by the active infected population. In this section, we aim to obtain conditions to ensure that $I_{\max}\leq M$, where $I_{\max}$ denote the maximum value of the infected population $I$.

From now on, we consider  that $S(0)>0$ and $I(0)>0$ so $I(t)\not =0$ for all $t>0$. The following lemma establish a simple sufficient condition on the basic reproduction number $\mathcal{R}_0$ to ensure that $I_{\max}\leq M$. \\

\begin{lemma} \label{lemma1}  
If $\mathcal{R}_0\leq \frac{N}{S(0)}$ and  $I(0)\leq M$ or if  $\frac{N}{S(0)}<\mathcal{R}_0 \leq \frac{N}{N-M}$ then $I_{\max}\leq M$.
\end{lemma}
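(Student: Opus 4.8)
The plan is to reduce the statement to the classical computation of the infected peak, carried out through the parametric solution \eqref{solution2}. Differentiating its second line gives $\dfrac{dI}{du}=\dfrac{N}{\mathcal{R}_0\,u}-x_0$, which vanishes only at $u^{*}:=\dfrac{N}{\mathcal{R}_0\,x_0}$, is positive for $u<u^{*}$ and negative for $u>u^{*}$; since also $I(u)\to-\infty$ as $u\to0^{+}$ and as $u\to+\infty$, the map $u\mapsto I(u)$ is unimodal on $(0,\infty)$ with global maximum $I(u^{*})$. Because the physical infected trajectory is the restriction of $I(u)$ to $u\in[u_\infty,u_0]$, this already gives the free bound $I_{\max}\le I(u^{*})$. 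I would also record, by substituting $u_0=e^{-\mathcal{R}_0 R(0)/N}$ and $x_0=S(0)\,e^{\mathcal{R}_0 R(0)/N}$ into \eqref{solution2}, the identities $x_0u_0=S(0)$ and $I(u_0)=I(0)$, together with $x_0\ge S(0)$ (valid since $R(0)\ge0$).

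For the first case, the hypothesis $\mathcal{R}_0\le\frac{N}{S(0)}$ is equivalent to $u^{*}=\frac{N}{\mathcal{R}_0 x_0}\ge\frac{S(0)}{x_0}=u_0$, so on the whole interval $[u_\infty,u_0]$ one has $u\le u^{*}$ and therefore $\frac{dI}{du}\ge0$. As the time-forward direction corresponds to $u$ \emph{decreasing} from $u_0$ to $u_\infty$, the function $I$ is non-increasing in $t$, hence $I_{\max}=I(u_0)=I(0)\le M$ by the assumption $I(0)\le M$.

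For the second case I would start from $I_{\max}\le I(u^{*})$ and evaluate, using $x_0u^{*}=\frac{N}{\mathcal{R}_0}$,
\[ I(u^{*})=\frac{N}{\mathcal{R}_0}\,\ln\!\left(\frac{N}{\mathcal{R}_0\,x_0}\right)-\frac{N}{\mathcal{R}_0}+N. \]
Since $\mathcal{R}_0\le\frac{N}{N-M}$ rearranges to $M\ge N-\frac{N}{\mathcal{R}_0}$, it suffices to show $I(u^{*})\le N-\frac{N}{\mathcal{R}_0}$, i.e.\ $\ln\!\left(\frac{N}{\mathcal{R}_0 x_0}\right)\le0$, i.e.\ $\mathcal{R}_0 x_0\ge N$; and this last inequality follows at once from $\mathcal{R}_0>\frac{N}{S(0)}$ and $x_0\ge S(0)$. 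This closes the proof.

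I do not anticipate a real obstacle. The two places requiring a little care are: (i) the passage from the unconstrained maximum $I(u^{*})$ to the actual peak $I_{\max}$ over the physical range $[u_\infty,u_0]$, which is handled by unimodality, so that $I_{\max}\le I(u^{*})$ holds even when $u^{*}\notin[u_\infty,u_0]$; and (ii) keeping track of the sign reversal between $u$ and $t$, so that "$\frac{dI}{du}\ge0$" reads as "$I$ non-increasing in time". Note that the Lambert W function is not needed here — this lemma provides only sufficient conditions — but it will presumably enter when establishing the sharp necessary-and-sufficient characterization of $\mathcal{R}_0$ announced in the abstract.
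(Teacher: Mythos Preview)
Your argument is correct, but it takes a different route from the paper. The paper proves Lemma~\ref{lemma1} directly from the ODE system~\eqref{model2}, without invoking the parametric solution at all: since $\frac{dI}{dt}$ has the sign of $S-\frac{N}{\mathcal{R}_0}$ and $S$ is non-increasing, the first case $\mathcal{R}_0\le\frac{N}{S(0)}$ gives $S(t)\le S(0)\le\frac{N}{\mathcal{R}_0}$, hence $I$ is non-increasing and $I_{\max}=I(0)\le M$; in the second case the peak occurs at $t^*$ with $S(t^*)=\frac{N}{\mathcal{R}_0}$, and the constraint $R(t^*)\ge0$ yields $I_{\max}=I(t^*)\le N-S(t^*)=N\bigl(1-\tfrac{1}{\mathcal{R}_0}\bigr)$, which is at most $M$ when $\mathcal{R}_0\le\frac{N}{N-M}$. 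You instead work through the $u$-parametrization~\eqref{solution2}, compute the global maximum $I(u^{*})$, and bound it by $N-\tfrac{N}{\mathcal{R}_0}$ via $\mathcal{R}_0 x_0\ge N$; this is essentially the computation the paper carries out later in Proposition~\ref{proposition1}, used here only as an upper bound. Both approaches ultimately rest on the same inequality (your $x_0\ge S(0)$ is precisely the paper's $R(0)\ge0$ in disguise, and more generally $R(t^*)\ge0$), but the paper's version is more elementary and keeps Lemma~\ref{lemma1} independent of the parametric machinery, whereas your version has the advantage of already setting up the exact peak formula that drives Proposition~\ref{proposition1}.
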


\begin{proof} 
Note first that, because we are considering $I\not=0$, second equation in~\eqref{model2} implies that $I$ is increasing if and only if $S>\frac{N}{\mathcal{R}_0}$, decreasing if and only if $S<\frac{N}{\mathcal{R}_0}$, and $\frac{dI}{dt}=0$ if and only if $S=\frac{N}{\mathcal{R}_0}$. Note also from equations~\eqref{model2}, that  $S$ and $R$ are non-increasing  and non-decreasing functions of $t$, respectively. 

Hence, if $\mathcal{R}_0\leq \frac{N}{S(0)}$ and $I(0)\leq M$, then  $S(0)\leq\frac{N}{\mathcal{R}_0}$ and $I$ is not increasing for all $t\geq 0$ and therefore $I_{\max}$ is already attained at $I(0)$ so $I_{\max}=I(0)\leq M$. 

On the other hand, if $\frac{N}{S(0)}<\mathcal{R}_0$ then $I$ is increasing until reach its maximum value $I_{\max}=I(t^*)$ for some $t^*$  satisfying $\frac{dI}{dt}(t^*)=0$ and therefore
\begin{equation}
    S(t^*)= \frac{N}{\mathcal{R}_0}.
    \label{eq.1} 
\end{equation}

From the fact that  $R(t^*)\geq 0$, and $N = S(t) + I(t) + R(t)$, follows that 
\begin{equation}
    I_{\max}=I(t^*)\leq R(t^*) + I(t^*) = N - S(t^*) = N - \frac{N}{\mathcal{R}_0} = N \left(1-\frac{1}{\mathcal{R}_0}\right).
    \label{inequality1}
\end{equation}

Finally, note that if $\mathcal{R}_0 \leq \frac{N}{N-M}$ then  $\displaystyle N \left(1-\frac{1}{\mathcal{R}_0}\right)\leq M $ and thus, from ~\eqref{inequality1} we can conclude that if $\mathcal{R}_0 \leq \frac{N}{N-M}$ then $I_{\max}\leq M$
\end{proof}

Lemma~\ref{lemma1} provides an easily verifiable condition on $\mathcal{R}_0$ to ensure that the threshold $M$ will not be exceeded. However, since it is a sufficient condition based on upper bounds on $I_{\max}$, it can be a very restrictive condition on $\mathcal{R}_0$. Furthermore, it does not provide information on the value of $I_{\max}$ if the condition is not satisfied. In the following result, we use the parametric solution~\eqref{solution2}, to obtain an expression for $I_{\max}$ that will allow us to establish a more robust condition on $\mathcal{R}_0$ to control $I_{\max}$.

\begin{proposition}\label{proposition1} 
$I_{\max}\leq \frac{N}{\mathcal{R}_0}\left(\ln\left(\frac{N}{\mathcal{R}_0S(0)}\right)-1\right)-R(0)+N$ and if $\mathcal{R}_0\geq\frac{N}{S(0)}$ the equality holds.
\end{proposition}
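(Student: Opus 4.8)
The plan is to compute $I_{\max}$ directly by maximizing the parametric expression for $I$ in~\eqref{solution2} over the admissible range of the parameter $u$. First I would regard $I(u)=\frac{N}{\mathcal{R}_0}\ln u - x_0 u + N$ as a function on $(0,\infty)$ and differentiate: $\frac{dI}{du}=\frac{N}{\mathcal{R}_0 u}-x_0$ and $\frac{d^2I}{du^2}=-\frac{N}{\mathcal{R}_0 u^2}<0$. Hence $I$ is strictly concave on $(0,\infty)$ and has a unique global maximizer $u^{*}=\frac{N}{\mathcal{R}_0 x_0}$. Substituting $u^{*}$ into $I(u)$ gives $I(u^{*})=\frac{N}{\mathcal{R}_0}\ln\!\left(\frac{N}{\mathcal{R}_0 x_0}\right)-\frac{N}{\mathcal{R}_0}+N$, and then replacing $x_0=S(0)e^{\frac{\mathcal{R}_0}{N}R(0)}$ and using $\ln\!\left(\frac{N}{\mathcal{R}_0 x_0}\right)=\ln\!\left(\frac{N}{\mathcal{R}_0 S(0)}\right)-\frac{\mathcal{R}_0}{N}R(0)$ yields precisely $I(u^{*})=\frac{N}{\mathcal{R}_0}\left(\ln\!\left(\frac{N}{\mathcal{R}_0 S(0)}\right)-1\right)-R(0)+N$, the right-hand side in the statement.

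Next I would establish the inequality. As recalled in Section~\ref{sec.model} (see Figure~\ref{Figure2}), the solution curve of~\eqref{model2} is traced by~\eqref{solution2} when $u$ ranges over $[u_\infty,u_0]\subset(0,\infty)$; in particular $I_{\max}=\max_{u\in[u_\infty,u_0]}I(u)$. Since this interval is contained in $(0,\infty)$, we get $I_{\max}\le\max_{u>0}I(u)=I(u^{*})$, which is exactly the claimed bound.

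Finally, for the equality under the hypothesis $\mathcal{R}_0\ge\frac{N}{S(0)}$, the point is to show that the global maximizer $u^{*}$ is admissible, i.e. $u^{*}\in[u_\infty,u_0]$. When $\mathcal{R}_0>\frac{N}{S(0)}$, the argument already given in the proof of Lemma~\ref{lemma1} shows that $I$ increases up to a finite time $t^{*}$ with $\frac{dI}{dt}(t^{*})=0$, hence $S(t^{*})=\frac{N}{\mathcal{R}_0}$; since $S(u)=x_0 u$ along the parametrization, the parameter value attained at $t^{*}$ is exactly $u^{*}=\frac{N}{\mathcal{R}_0 x_0}$, so $I_{\max}=I(t^{*})=I(u^{*})$. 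In the boundary case $\mathcal{R}_0=\frac{N}{S(0)}$ a short computation gives $u^{*}=\frac{S(0)}{x_0}=e^{-\frac{\mathcal{R}_0}{N}R(0)}=u_0$, and since $I(u_0)=I(0)$ while Lemma~\ref{lemma1} gives $I_{\max}=I(0)$ in this case, equality holds again.

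The only genuinely delicate step is the admissibility claim $u^{*}\in[u_\infty,u_0]$ in the equality case; I would handle it not by estimating $u_\infty$ directly but by transporting the condition $S(t^{*})=N/\mathcal{R}_0$ supplied by Lemma~\ref{lemma1} into the parametrization through $S(u)=x_0 u$. The remaining work — the two derivative computations, the evaluation of $I(u^{*})$, and the logarithm identity — is routine algebra.
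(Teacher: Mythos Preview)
Your argument is correct and follows the paper's proof almost verbatim: the same concavity computation for $I(u)$, the same maximizer $u^{*}=\frac{N}{\mathcal{R}_0 x_0}$, the same evaluation of $I(u^{*})$, and the same containment argument $[u_\infty,u_0]\subset(0,\infty)$ for the inequality.

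The only point of departure is how you verify $u^{*}\in[u_\infty,u_0]$ in the equality case. The paper argues directly: from $\mathcal{R}_0\ge N/S(0)$ it gets $u^{*}\le u_0$, and then rules out $u^{*}<u_\infty$ by a short contradiction (both $u_\infty$ and $u_0$ would lie on the decreasing side of $I$, forcing $I(u_\infty)\ge I(u_0)>0$, against $I(u_\infty)=0$). You instead pull the conclusion from Lemma~\ref{lemma1}: the time $t^{*}$ with $S(t^{*})=N/\mathcal{R}_0$ exists, and via $S(u)=x_0 u$ this forces the corresponding parameter value to be exactly $u^{*}$, hence admissible. Your route is arguably cleaner because it avoids the contradiction and ties the result back to the dynamical picture already established; the price is the separate treatment of the boundary case $\mathcal{R}_0=N/S(0)$, which the paper's inequality-based argument absorbs automatically.
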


\begin{proof}
Lets consider the parametric equations~\eqref{solution2}. Recall that when $u$ varies on the the interval $(0,\infty)$, equations~\eqref{solution2} describe an extended curve which contains the solution curve of~\eqref{model1}, but crossed in the time-opposite direction when $u$ varies on the interval $[u_\infty, u_0] \subset (0,\infty)$. Note however that the maximum value of $I$ on this curve does not depend on the parametrization neither on the curve orientation.

From the second equation in~\eqref{solution2}, we have that

\begin{equation}
  \frac{dI}{du}=\frac{N}{\mathcal{R}_0u}-x_0 \quad\text{and}\quad \frac{d^2I}{du^2}=-\frac{N}{\mathcal{R}_0u^2}<0.
\end{equation}
  
Thus, $I$ is an strictly concave function on $u$ with a unique global maximum attained in $u^*=\frac{N}{\mathcal{R}_0x_0}$. Furthermore, $I$ is strictly increasing if $u<\frac{N}{\mathcal{R}_0x_0}$ and strictly decreasing if $u>\frac{N}{\mathcal{R}_0x_0}$.
  
The maximum possible value for $I$ along the extended curve is therefore given by

\begin{align*}
  I(u^*)&= \frac{N}{\mathcal{R}_0 }\ln \left(\frac{N}{\mathcal{R}_0x_0}\right)-\frac{x_0N}{\mathcal{R}_0x_0}+N=\frac{N}{\mathcal{R}_0 } \left(\ln\left(\frac{N}{\mathcal{R}_0x_0}\right)-1\right)+N=\frac{N}{\mathcal{R}_0}\left(\ln\left(\frac{N}{\mathcal{R}_0S(0)}\right)-1\right)-R(0)+N,
\end{align*}
  
where the last equality is obtained using that $x_0=S(0)e^{\frac{\mathcal{R}_0}{N}R(0)}$. Note that $I(u^*)$ is not necessarily equal to the maximum number of infected $I_{\max}$, but we have indeed that $I_{\max}\leq I(u^*)$. The equality will be valid if and only if the global maximum of $I$ is attained in the part of the curve corresponding to the epidemic, {\it i.e.} if and only if $u^*\in [u_\infty,u_0]$. If $\mathcal{R}_0\geq\frac{N}{S(0)}$ then $\frac{N}{\mathcal{R}_0 S(0)}\leq 1$ and thus
$$u^*=\frac{N}{\mathcal{R}_0x_0}=\frac{N}{\mathcal{R}_0S(0)}e^{-\frac{\mathcal{R}_0}{N}R(0)}\leq e^{-\frac{\mathcal{R}_0}{N}R(0)}=u_0.$$
  
If $u_*<u_\infty\leq u_0$ then $u_\infty$ and $u_0$ would be both in the decreasing side of $I$ so $I(u_\infty)\geq I(u_0)$.  but this would be absurd since $I(u_0)=I(0)>0$ and $I(u_\infty)=\lim_{t\to \infty}I(t)$ which in the case of the SIR model \eqref{model1} is equals to zero. Hence, we conclude that if $\mathcal{R}_0\geq\frac{N}{S(0)}$ then $u^*\in [u_\infty,u_0]$ and therefore in this case $I_{\max}$ and $I(u_*)$ must coincide, \textit{i.e.}
  
\begin{equation}
  I_{\max}=\frac{N}{\mathcal{R}_0 } \left(\ln\left(\frac{N}{\mathcal{R}_0x_0}\right)-1\right)+N = \frac{N}{\mathcal{R}_0}\left(\ln\left(\frac{N}{\mathcal{R}_0S(0)}\right)-1\right)-R(0)+N.
  \label{Imax1}
\end{equation}
\end{proof}

Expression~\eqref{Imax1} can be obtained also without using the parametric equations~\eqref{solution2}, but instead solving a separable ODE  obtained by dividing the first equation in~\eqref{model1} by the second one. See for example Section 2.2.7 in~\cite{Weiss2013}.

Expression~\eqref{Imax1} can be used to easily check, for some given values of the reproduction number, initial conditions and the threshold $M$, if $I_{\max}$ will exceed or not the threshold $M$. Furthermore, expression~\eqref{Imax1} can be used to estimate conditions on the parameters or on the initial conditions, ensuring $I_{\max}\leq M$. In particular, the next proposition use expression~\eqref{Imax1} and the Lambert W function to determine a necessary and sufficient condition on $\mathcal{R}_0$ to ensure that $I_{\max}\leq M$.

\begin{proposition}\label{proposition2}
Consider that $\mathcal{R}_0\geq \frac{N}{S(0)}$  and $I(0)<M<S(0)+I(0)$. Then $I_{max}\leq M$ if and only if 
\begin{equation}
    \displaystyle \mathcal{R}_0 \leq N\frac{W_{-1}\left(\frac{M-N+R(0)}{S(0)e}\right)}{M-N+R(0)},
    \label{R0max}  
\end{equation}
where $W_{-1}$ denotes the lower branch of the Lambert W function.
\end{proposition}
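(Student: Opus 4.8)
The plan is to convert the inequality $I_{\max}\le M$ into one that can be solved explicitly for $\mathcal{R}_0$ through the Lambert W function. Since $\mathcal{R}_0\ge \frac{N}{S(0)}$, Proposition~\ref{proposition1} gives the \emph{exact} value of $I_{\max}$, so $I_{\max}\le M$ is equivalent to
\[
\frac{N}{\mathcal{R}_0}\left(\ln\left(\frac{N}{\mathcal{R}_0S(0)}\right)-1\right)\le M-N+R(0).
\]
Write $c:=M-N+R(0)$. The hypothesis $I(0)<M<S(0)+I(0)$ (together with $N=S(0)+I(0)+R(0)$) is exactly the statement $-S(0)<c<0$, hence $\frac{c}{S(0)e}\in\left(-\frac1e,0\right)$, which is precisely the interval on which both real branches $W_0$ and $W_{-1}$ of the Lambert W function are defined (see Appendix~A).

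Next I would introduce the change of variable $z:=\dfrac{N}{\mathcal{R}_0S(0)e}$. A direct computation gives $\ln\!\left(\frac{N}{\mathcal{R}_0S(0)}\right)-1=\ln z$ and $\frac{N}{\mathcal{R}_0}=S(0)e\,z$, so the displayed inequality becomes simply $z\ln z\le \frac{c}{S(0)e}$. Moreover $\mathcal{R}_0\ge \frac{N}{S(0)}$ is equivalent to $z\le \frac1e$, and on the interval $\left(0,\frac1e\right]$ the map $z\mapsto z\ln z$ is continuous, strictly decreasing, and takes all values in $\left[-\frac1e,0\right)$. Therefore $z\ln z\le \frac{c}{S(0)e}$ holds if and only if $z\ge z_\star$, where $z_\star\in\left(0,\frac1e\right)$ is the unique root of $z_\star\ln z_\star=\frac{c}{S(0)e}$. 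Setting $y_\star:=\ln z_\star$, we have $y_\star<-1$ and $y_\star e^{y_\star}=\frac{c}{S(0)e}$, so by the definition of the lower branch $y_\star=W_{-1}\!\left(\frac{c}{S(0)e}\right)$ and $z_\star=\exp W_{-1}\!\left(\frac{c}{S(0)e}\right)$.

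Finally, I would unwind the substitution: $z\ge z_\star$ reads $\mathcal{R}_0\le \dfrac{N}{S(0)e\,\exp W_{-1}(c/(S(0)e))}$, and applying the identity $e^{W(x)}=x/W(x)$ (valid since $x=\frac{c}{S(0)e}\ne 0$) turns the right-hand side into $\dfrac{N\,W_{-1}(c/(S(0)e))}{c}$, which is exactly~\eqref{R0max}. Since every step in this chain is an equivalence, this proves the ``if and only if''. I expect the points that need the most care to be: verifying that $\frac{c}{S(0)e}$ lies in the admissible interval $\left(-\frac1e,0\right)$ so that $W_{-1}$ is real; justifying that the relevant branch is $W_{-1}$ and not $W_0$, which is forced here by $z_\star\le\frac1e$ (equivalently $\ln z_\star\le -1$) coming from $\mathcal{R}_0\ge \frac{N}{S(0)}$; and keeping track of the reversal of inequalities both through the decreasing map $z\mapsto z\ln z$ and through $\mathcal{R}_0=\frac{N}{S(0)e\,z}$.
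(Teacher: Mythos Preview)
Your proof is correct and follows essentially the same approach as the paper: both invoke the exact formula for $I_{\max}$ from Proposition~\ref{proposition1}, reduce the condition $I_{\max}\le M$ to a transcendental equation solvable via the Lambert W function, and identify the relevant branch as $W_{-1}$ from the constraint $\mathcal{R}_0\ge N/S(0)$. Your substitution $z=\tfrac{N}{\mathcal{R}_0S(0)e}$ (versus the paper's $v=\tfrac{N}{\mathcal{R}_0S(0)}$) gives the slightly cleaner form $z\ln z=\text{const}$ rather than $v\ln v=v+b$, and you handle the inequality directly through the monotonicity of $z\mapsto z\ln z$ on $(0,1/e]$ instead of first proving that $I_{\max}$ is increasing in $\mathcal{R}_0$ and then solving the equality case, but these are only cosmetic differences.
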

\begin{proof}

The condition $\mathcal{R}_0\geq \frac{N}{S(0)}$ allow us to consider the equality~\eqref{Imax1}. The inequality $I(0)<M<S(0)+I(0)$ means that the threshold $M$ has not been attained at initial conditions and also that $M$ is not impossible to be attained, because $S(0)+I(0)$ is the maximum number of population that is not removed yet and can eventually be infected at a specific time.

From second equality in~\eqref{Imax1} note that

\begin{equation}
    \frac{dI_{\max}}{d\mathcal{R}_0}=-\frac{N}{\mathcal{R}_0^2}\left(\ln\left(\frac{N}{\mathcal{R}_0S(0)}\right)-1\right)-\frac{N}{\mathcal{R}_0}\left(\frac{\mathcal{R}_0 S(0)}{N}\frac{N}{\mathcal{R}_0^2 S(0)} \right)=\frac{N}{\mathcal{R}_0^2}\ln\left(\frac{\mathcal{R}_0S(0)}{N}\right),
\end{equation}

so $I_{\max}$ is an increasing function on $\mathcal{R}_0$ when $\mathcal{R}_0>\frac{N}{S(0)}$. 

Additionally, note from~\eqref{Imax1} that if $\mathcal{R}_0 \to \infty$ then $I_{\max}\to N-R(0)=S(0)+I(0)$, so the condition  $I(0)<M<S(0)+I(0)$ implies that $M$ is in fact an attainable value for $I_{\max}$, \textit{i.e.} there exists a value $\mathcal{R}_0^*>\frac{N}{S(0)}$ such that $I_{\max}=M$. In fact, $I_{\max}\leq M$ if and only if  $\mathcal{R}_0\leq \mathcal{R}_0^*$ because we have established that $I$ is increasing on $\mathcal{R}_0$.

Using the Lambert W function, (See \ref{Append} for details), it is possible to determine the value $\mathcal{R}_0^*$ such that $I_{\max}(\mathcal{R}_0^*)=M$. From Equation~\eqref{Imax1} we have that:

\begin{align*}
    I_{\max}&=M\\
    \frac{N}{\mathcal{R}_0^*}\left(\ln\left(\frac{N}{\mathcal{R}_0^*S(0)}\right)-1\right)&=M+R(0)-N\\
    \frac{N}{\mathcal{R}_0^*S(0)}\ln\left(\frac{N}{\mathcal{R}_0^*S(0)}\right)&=\frac{N}{\mathcal{R}_0^*S(0)}+\frac{M+R(0)-N}{S(0)}\\
    v\ln v&=v+b,
\end{align*}
with $v=\frac{N}{\mathcal{R}_0^*S(0)}$ and $b=\frac{M+R(0)-N}{S(0)}$. According to Lemma~\ref{Lambert3} in \ref{Append}, the solutions to the equation $ v \ln v=v+b$  have the form $$v=\frac{b}{W(be^{-1})}$$ and because $be^{-1+1}=b=\frac{M+R(0)-N}{S(0)}$ the condition  $I(0)<M<S(0)+I(0)$ implies that $-1<b<0$. Hence, Lemma~\ref{Lambert3} also implies that $v \ln v=v+b$ have two solutions, each one corresponding to a specific branch of the function $W$. One of these solution is precisely given by $v=\frac{N}{\mathcal{R}_0^*S(0)}\leq 1$, so we must have for some branch of $W$ that
\begin{equation}
    \frac{b}{W(be^{-1})}\leq 1
\end{equation}
for  $b$ in $(-1,0)$. This is only possible for the lower branch of the Lambert W function and thus, we conclude that
$\frac{b}{W_{-1}(be^{-1})}=\frac{N}{\mathcal{R}_0^*S(0)}$ and therefore 
\begin{equation}
    \mathcal{R}_0^*=N\frac{W_{-1}\left(\frac{M-N+R(0)}{S(0)e}\right)}{M-N+R(0)}
    \label{eq.R0min}
\end{equation}
as we desired to proof.
\end{proof}

\begin{figure}[!ht]
    \center
    \includegraphics[width=0.6\textwidth]{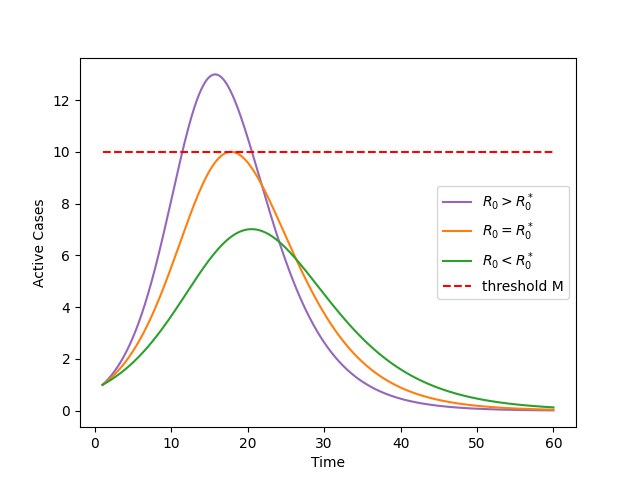}
    \caption{Infected curves for different values of $\mathcal R_0$ respect to the threshold $M=10$ and critical reproduction number $\mathcal R_0^*\approx 1.7$ as in Example \ref{example1}.}
    \label{fig_prop2}
\end{figure}

\begin{example}\label{example1}
 To illustrate the result obtained in Proposition~\ref{proposition2}, consider a scenario with total population $N=100$, recovery rate $\gamma = 1/3$, initial conditions $S(0) = 99$, $I(0) = 1$, $R(0) =0$, and an epidemiological threshold corresponding to 10\% of the total population, i.e. $M =10$. In this case, using equation \eqref{eq.R0min} and  the implementation of the Lambert W function ({\it lambertw}) from the module {\it scipy} (Scientific Python), we obtain a critical value  $\mathcal{R}_0^*\approx 1.7$. The corresponding infected curve is pictured in orange at Figure~\ref{fig_prop2}. Note that, as expected, the maximum value of active infected cases,  $I_{max}$,  corresponds exactly to the epidemiological threshold $M$.

In Figure~\ref{fig_prop2}, are also pictured the infected curves corresponding to a basic reproduction number 10\% higher (purple line) and 10\% lower (green) than the critical value $\mathcal{R}_0^*$, and as expected, the values of $I_{\max}$ are higher and lower than $M$, respectively.
\end{example}

\section{Quantifying the impact of threshold exceeding} 
\label{sec.quantifying}

In this section we propose and compare different measures to quantify the impact of a possible threshold exceeding. After Propositions~\ref{proposition1} and~\ref{proposition2}, we immediately can consider as plausible measures for threshold exceeding quantification, the following quantifiers  $Q_1$ and $Q_2$:

\begin{equation}
    Q_1 =\mathcal{R}_0 - \mathcal{R}_0^* = \mathcal{R}_0-N\frac{W_{-1}\left(\frac{M-N+R(0)}{S(0)e}\right)}{M-N+R(0)},
    \label{eq.Q1}
\end{equation}

and 

\begin{equation} 
    Q_2= I_{max} - M = \frac{N}{\mathcal{R}_0}\left(\ln\left(\frac{N}{\mathcal{R}_0S(0)}\right)-1\right)-R(0)+N-M.
    \label{eq.Q2}
\end{equation}

Quantifiers $Q_1$ and $Q_2$ are measuring how far $\mathcal{R}_0$ and $I_{\max}$ are, respectively, from the critical values for threshold $M$ exceeding.

Figures~\ref{figExQ1} and~\ref{figExQ2} illustrate quantifiers $Q_1$ and $Q_2$. The blue line in Figure~\ref{figExQ1} corresponds to the value of $I_{max}$ calculated for different values of $\mathcal{R}_0$. Note that in $\mathcal{R}_0^*$, $I_{max} = M$. The value of $Q_1$ for a specific $\mathcal{R}_0$ is the difference between $\mathcal{R}_0$ and the $\mathcal{R}_0^*$. The blue line in Figure~\ref{figExQ2} corresponds to the values of $I(t)$ and the maximum peak of this curve, $I_{max}$, is highlighted by the black dashed line. The value of $Q_2$ is the difference between $I_{max}$ and the threshold $M$.
 
\begin{figure}[ht!] 
    \label{figExQ1234}
    \centering
    \subfigure[]
    {\centering \includegraphics[width=0.45\linewidth]{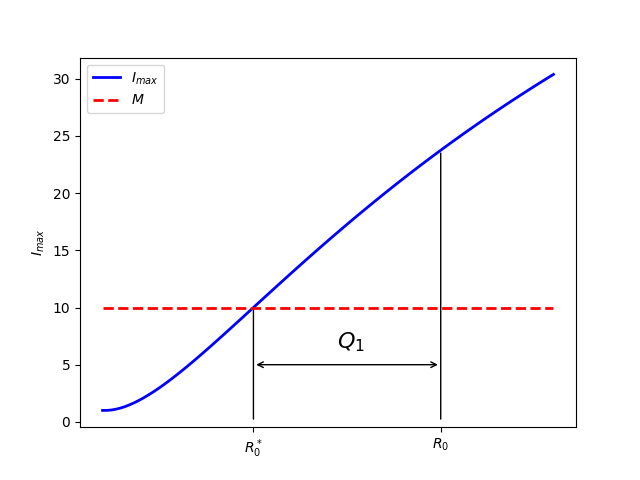}\label{figExQ1}}
    \subfigure[]
    {\centering \includegraphics[width=0.45\linewidth]{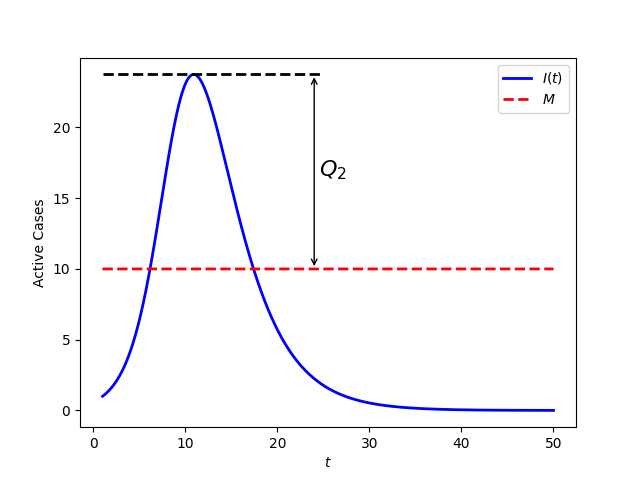}\label{figExQ2}}
    \subfigure[]
    {\centering \includegraphics[width=0.45\linewidth]{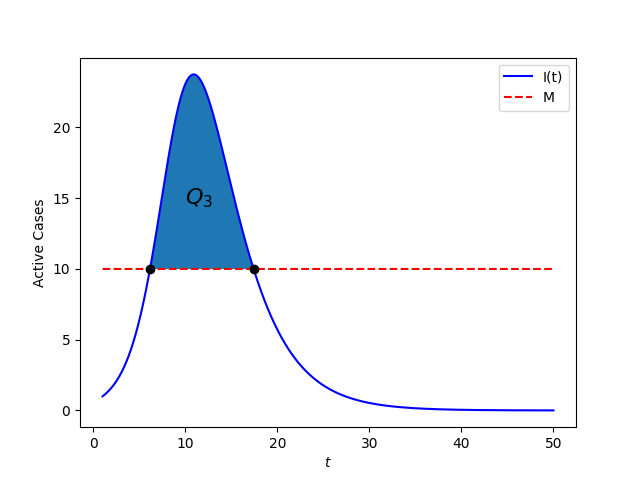}\label{figExQ3}}
    \subfigure[]
    {\centering \includegraphics[width=0.45\linewidth]{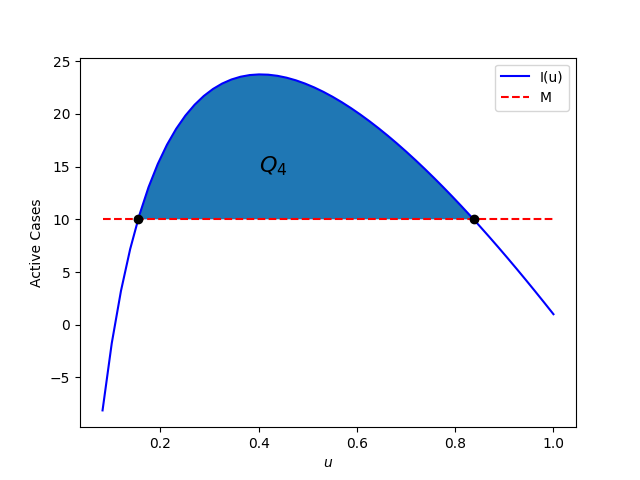}\label{figExQ4}}
	\caption{Representation of quantifiers $Q_1$, $Q_2$, $Q_3$, and $Q_4$, considering the parameters $N=100$, $\gamma = 1/3$, $\mathcal{R}_0 = 2.5$, $S(0) = N-1$, $I(0) = 1$, $R(0) =0$, and $M = 0.1\cdot N$. 
    The threshold $M$ is represented by the red dashed line.}
\end{figure}

The quantifiers $Q_1$ and $Q_2$ are only considering the impact on the epidemic peak, without explicitly consider the total impact of the threshold exceeding $M$. To take into account such concern, we can consider, as foreshadowed in the introduction (Figure~\ref{Figure1}), an integration-based measure, for example:

\begin{equation}\label{Integral1}
    Q_3=\int_{t_i}^{t_f}(I(t)-M)dt,
\end{equation}
where $t_i\leq t_f$ are values of $t$ such that $I(t_i)=M=I(t_2)$ and $M<I_{\max}$. Quantifier $Q_3$ is illustrated in Figure~\ref{figExQ3}.

The quantifier $Q_3$ is a natural expression for impact quantification, however, note that there is not an analytical expression for $I$ in terms of $t$, neither closed forms for the limits of integration $t_i$ and $t_f$, therefore $Q_3$ can only be estimated numerically.

A similar integration-based measure can be defined using the parametric equations in~\eqref{solution2}, with analogous interpretation but expressed in terms of the parameter $u$. In contrast with the previous quantifier, there exists a closed form for the integrand function in the definition of $Q_4$:

\begin{equation}
    Q_4=\int_{u_f}^{u_i}(I(u)-M)du =\int_{u_f}^{u_i} \left(\frac{N}{\mathcal{R}_0  }\ln{u}-x_0u+N-M\right)du=\left(\frac{N}{\mathcal{R}_0}(u\ln u-u)-\frac{x_0u^2}{2}+(N-M)u\right)\bigg\rvert_{u_f}^{u_i},
    \label{Integral2}
\end{equation}

where $u_f\leq u_i$ are such that $I(u_f)=M=I(u_i)$. We will see in Subsection~\ref{subsec.Q4}, that there exists closed expressions for $u_f$ and $u_i$, in terms of the Lambert W function. Thus, $Q_4$ can be expressed completely in a closed form. Quantifier $Q_4$ is illustrated in Figure~\ref{figExQ4}.
Note that in ~\ref{figExQ4} $I$ is varying according to the parameter $u$, while in
~\ref{figExQ3} $I$ is varying according to $t$. 

While $Q_3$ and $Q_4$ are defined in similar ways as defined integrals of a real function, their values are not equal in general, because are based on different parametrizations of the infected curve. It is possible to define an integration-based measure that does not depend on the parametrization, using the following line integral on the three-dimensional extended curve:

$$Q_5=\int_C f\cdot ds,$$

where $C$ corresponds to the part of the epidemic curve where $I$ is bigger than $M$ and $f$ is the scalar field defined by $f=I-M$. Quantifier $Q_5$ can be considered as a natural expression for impact quantification when considering the three-dimensional epidemic curve and the area of the surface determined by the curve an the plane $I=M$ (see Figure~\ref{Figure3}).

\begin{figure}[!ht]
    \center
    \includegraphics[width=\textwidth]{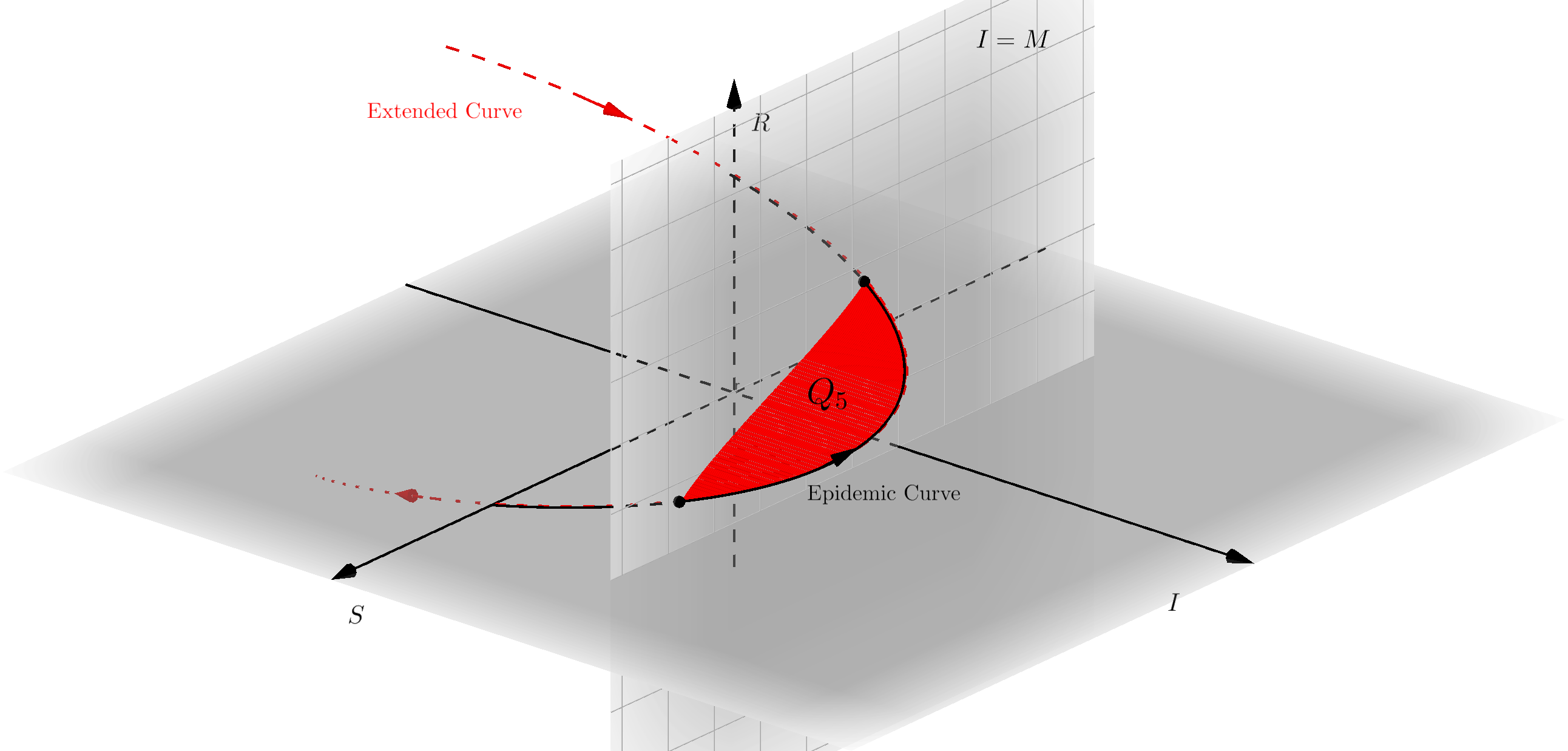}
    \caption{Line Integral $Q_5$ as a measure for threshold exceeding quantification.}
    \label{Figure3}
\end{figure}

The quantifier $Q_5$ does not depend on the parametrization or the orientation of $C$ because it is defined as the line integral of scalar fields on a smooth curve. Hence, to calculate $Q_5$ we could use any parametrization, in particular, we have that

\begin{equation}
    Q_5=\int_C f\cdot ds=\int_{t_i}^{t_f}(I(t)-M)|r_t'(t)|dt=\int_{u_f}^{u_i}(I(u)-M)|r_u'(u)|du,
\end{equation}

where $t_i$, $t_f$, $u_f$ and $u_i$ are defined as before and $r_t$ and $r_u$ are parametrizations of the epidemiological curve in terms of $t$ and $u$ respectively. As previously discussed, there are not closed expressions for  $I(t), t_i, t_f$ neither for $r_t$. However, note that for the parameter $u$, the expressions for $r_u(u)$ and $I(u)$ are given precisely by Equations~\eqref{solution2}. Thus, $Q_5$ can be expressed as

\begin{equation}
    Q_5=\int_{u_f}^{u_i}(I(u)-M)|r_u'(u)|du=\int_{u_f}^{u_i} \left(\frac{N}{\mathcal{R}_0  }\ln{u}-x_0u+N-M\right)\sqrt{2\left(x_0^2+\frac{N}{\mathcal{R}_0 u}\left(\frac{N}{\mathcal{R}_0 u}-x_0\right)\right)}du.
    \label{Integral3}
\end{equation}

In subsection~\ref{subsec.R0impact}, we will use numerical examples to compare the quantifiers introduced above and to illustrate the influence of variations on parameters $\mathcal{R}_0$ and $M$. Before that, in the next subsection, we show that $u_f$ and $u_i$ can be expressed in closed-form in terms of the Lambert W function.

\subsection{Estimation of $u_i$ and $u_f$}
\label{subsec.Q4}

The effective computation of quantifiers $Q_4$ in ~\eqref{Integral2} and $Q_5$ in ~\eqref{Integral3} requires to determine the values of $u_i$ and $u_f$ such that  $I(u_f)=M=I(u_i)$. In general, one can consider the following problem: It is possible for the extended curve to attain a given value $M$? When will this happen? What are the corresponding values of $u$? In terms of the parametric equations~\eqref{solution2}, we aim to determine solutions $u \in (0,\infty)$, for the equation
\begin{equation}
    I(u)=\frac{N}{\mathcal{R}_0  }\ln{u}-x_0u+N=M.
    \label{eq.M}
\end{equation}

We already established that the maximum possible value for $I$ is given by $\frac{N}{\mathcal{R}_0 } \left(\ln\left(\frac{N}{\mathcal{R}_0x_0}\right)-1\right)+N$, so~\eqref{eq.M} can not have solutions if $M$ is bigger than this quantity. The following proposition, which uses Lemma~\ref{Lambert2} in \ref{Append}, can be used to reach the same conclusion, but most importantly, allow us to obtain a closed-form for the solutions of~\eqref{eq.M}, and will allow us to express $u_f$ and $u_i$, in terms of the Lambert W function.

\begin{proposition}\label{ValuesI}
Let $M$ be a positive real number, $M\leq N$ and consider $S(0)>0$. 
\begin{itemize}
    \item If $M<\frac{N}{\mathcal{R}_0  } \left(\ln\left(\frac{N}{\mathcal{R}_0x_0}\right)-1\right)+N,$ then the curve $I$ attain the value $M$ in  two values of $u$ considering the two branches of $W$ in the expression
    
    \begin{equation}
        u=\frac{W\left(-x_0\frac{\mathcal{R}_0}{N} e^{\frac{\mathcal{R}_0}{N}(M-N)}\right)}{-x_0 \frac{\mathcal{R}_0}{N}}=\frac{ W\left(-\frac{\mathcal{R}_0}{N}S(0) e^{\frac{\mathcal{R}_0}{N}(R(0)+M-N)}\right)}{-\frac{\mathcal{R}_0}{N} S(0)e^{\frac{\mathcal{R}_0}{N}R(0)}}.
        \label{ucritical}
    \end{equation}
        
    \item If $\frac{N}{\mathcal{R}_0  } \left(\ln\left(\frac{N}{\mathcal{R}_0x_0}\right)-1\right)+N<M$, the extended infected curve never attains the value $M$.
\end{itemize}
\end{proposition}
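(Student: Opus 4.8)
The plan is to solve equation~\eqref{eq.M} in closed form and then to count admissible roots via the branch structure of $W$. First I would isolate the logarithm in $I(u)=M$: it reads $\frac{N}{\mathcal{R}_0}\ln u = x_0 u + M - N$, so multiplying by $\frac{\mathcal{R}_0}{N}$ and exponentiating gives $u = \kappa\, e^{\alpha u}$ with $\alpha := \frac{\mathcal{R}_0 x_0}{N}>0$ and $\kappa := e^{\frac{\mathcal{R}_0}{N}(M-N)}>0$. Rewriting this as $(-\alpha u)\,e^{-\alpha u} = -\alpha\kappa$ and applying Lemma~\ref{Lambert2} yields $-\alpha u = W(-\alpha\kappa)$, i.e. $u = W(-\alpha\kappa)/(-\alpha)$; substituting $-\alpha\kappa = -\frac{\mathcal{R}_0 x_0}{N}e^{\frac{\mathcal{R}_0}{N}(M-N)}$ and then $x_0 = S(0)e^{\frac{\mathcal{R}_0}{N}R(0)}$ recovers exactly the two expressions in~\eqref{ucritical}.

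Next I would examine the argument $z := -\alpha\kappa$ of $W$. Since $\mathcal{R}_0, x_0, N>0$ we always have $z<0$, so $W(z)$ is real iff $z\ge -1/e$, with two distinct real branches $W_0(z)\in(-1,0)$ and $W_{-1}(z)\in(-\infty,-1)$ when $z>-1/e$ (and $W_0(-1/e)=W_{-1}(-1/e)=-1$). I would then verify that the threshold on $z$ is exactly the threshold on $M$ in the statement: taking logarithms, $z>-1/e \iff \ln(\alpha\kappa)<-1 \iff \ln\!\left(\frac{\mathcal{R}_0 x_0}{N}\right)+\frac{\mathcal{R}_0}{N}(M-N)<-1$, which rearranges to $M<\frac{N}{\mathcal{R}_0}\left(\ln\!\left(\frac{N}{\mathcal{R}_0 x_0}\right)-1\right)+N$. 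Hence in the first case $z\in(-1/e,0)$: both $W_0(z)$ and $W_{-1}(z)$ are strictly negative, so $u=W(z)/(-\alpha)$ is strictly positive on each branch, giving two distinct values of $u\in(0,\infty)$ solving~\eqref{eq.M}; in the second case $z<-1/e$, $W(z)$ is not real, so~\eqref{eq.M} has no solution and the extended curve never attains $M$. (The borderline case $z=-1/e$, i.e. $M$ equal to the maximum, produces the single root $u=u^*=\frac{N}{\mathcal{R}_0 x_0}$.)

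As a cross-check that these are all the roots, the proof of Proposition~\ref{proposition1} already shows that $I(u)$ is strictly concave in $u$, with $I(u)\to-\infty$ as $u\to 0^+$ and as $u\to\infty$ and unique maximum $I(u^*)=\frac{N}{\mathcal{R}_0}\left(\ln\!\left(\frac{N}{\mathcal{R}_0 x_0}\right)-1\right)+N$; so a horizontal line at height $M$ meets the graph in exactly two points when $M$ is below this value and in none when it is above, matching the branch count.

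I do not expect a genuine obstacle here. The one step needing care is the passage from $(-\alpha u)e^{-\alpha u}=-\alpha\kappa$ to $-\alpha u=W(-\alpha\kappa)$: one must apply Lemma~\ref{Lambert2} on the correct domain so that no root of~\eqref{eq.M} is created or lost, and it is precisely the two-valuedness of $W$ on $(-1/e,0)$ that accounts for the two roots $u_f$ and $u_i$. The remaining work — translating $z\ge -1/e$ into the stated inequality on $M$ and checking positivity of $u$ on each branch — is routine algebra.
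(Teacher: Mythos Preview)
Your proposal is correct and follows essentially the same approach as the paper: rewrite $I(u)=M$ as $\ln u = au+b$ with $a=\frac{\mathcal{R}_0 x_0}{N}$ and $b=\frac{\mathcal{R}_0}{N}(M-N)$, invoke the Lambert-$W$ solution $u=W(-ae^b)/(-a)$, and then translate the branch condition $0<ae^{b+1}<1$ into the stated inequality on $M$. One small remark: when you write ``applying Lemma~\ref{Lambert2}'' to pass from $(-\alpha u)e^{-\alpha u}=-\alpha\kappa$ to $-\alpha u=W(-\alpha\kappa)$, you are really invoking Lemma~\ref{Lambert1} (the definition of $W$); Lemma~\ref{Lambert2} is the ready-made statement for $\ln u=au+b$, which you have in effect re-derived. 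Your added checks (positivity of $u$ on each branch, and the concavity argument confirming the root count) are not in the paper's proof but are welcome sanity checks.
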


\begin{proof}
From Equation~\eqref{eq.M} we have that

\begin{align*}
    \frac{N}{\mathcal{R}_0  }\ln{u}-x_0u+N&=M\\
    \ln{u}&=\frac{\mathcal{R}_0}{N} x_0 u+\frac{\mathcal{R}_0}{N}\left(M-N\right),
\end{align*}
 
so the expression~\eqref{ucritical} for the solutions $u$, when they exist, can be obtained from Lemma~\ref{Lambert2} considering $a=\frac{\mathcal{R}_0 x_0}{N}$ and $b=\frac{\mathcal{R}_0}{N}\left(M-N\right)$.

Let us analyze then, if under the conditions considered, Equation~\eqref{ucritical} has two solutions. The condition for the existence of two solutions is given in Lemma~\ref{Lambert2} by $0<ae^{b+1}<1$ which in this case is equivalent to

$$0<\frac{\mathcal{R}_0 x_0}{N}e^{\frac{\mathcal{R}_0}{N}\left(M-N\right)+1}<1.$$

Left side inequality is immediately satisfied because $x_0>0$ when $S(0)>0$ and the right side is equivalent to

\begin{align*}
    \frac{\mathcal{R}_0 x_0}{N}e^{\frac{\mathcal{R}_0}{N}\left(M-N\right)+1}&<1\\
    e^{\frac{\mathcal{R}_0}{N}\left(M-N\right)+1}&<\frac{N}{\mathcal{R}_0 x_0}\\
    \frac{\mathcal{R}_0}{N}\left(M-N\right)+1&<\ln\left(\frac{N}{\mathcal{R}_0x_0}\right)\\
    M&<\frac{N}{\mathcal{R}_0  } \left(\ln\left(\frac{N}{\mathcal{R}_0x_0}\right)-1\right)+N,
\end{align*}
which is precisely our hypothesis. By the same reasoning, the inequality $\frac{N}{\mathcal{R}_0} \left(\ln\left(\frac{N}{\mathcal{R}_0x_0}\right)-1\right)+N<M$ is equivalent to $1<ae^{b+1}$ which implies by Lemma~\ref{Lambert2} that in this case Equation \eqref{eq.M} has no solutions.\end{proof}

\subsection{Numerical Examples} 
\label{subsec.R0impact}
Although the quantifiers presented above aim to measure the same phenomenon (the impact of exceeding the threshold $M$), each one does it in a different way. To illustrate these differences, in this section we present some numerical comparisons between the quantifiers.

The values of $Q_1$ and $Q_2$ were calculated with a closed expressions (Equations~\eqref{eq.Q1} and~\eqref{eq.Q2}, respectively). 
The value of $Q_4$ was also calculated by a closed expression (right side of~\eqref{Integral2}), where the integration extremes $u_i$ and $u_f$ were calculated using Equation~\eqref{ucritical} with the implementation of the Lambert W function ({\it lambertw}) from the module {\it scipy} (Scientific Python). 
On the other hand, the values of $Q_3$ and $Q_5$ were obtained using numerical approximations.
For $Q_3$, the Equation~\eqref{Integral1} was used where the values of $I(t)$ were calculated via a numerical solution of EDO's (using function {\it odeint} from {\it scipy}); the integration extremes were calculated numerically by performing a search among the discretized values of $I (t)$; and the integral was also performed numerically (using function {\it trapz} from the {\it Numpy} library for the Python).
For $Q_5$ was used the right side of Equation~\eqref{Integral3}. 
As for $Q_3$, numerical integration was used. 
However, the integration extremes were calculated exactly by Equation~\eqref{ucritical} and $Q_5$ does not need the numerical solution of the ODE system.

\begin{figure}[ht!] 
    \centering
    \subfigure[$Q_1$]
    {\centering \includegraphics[width=0.45\linewidth]{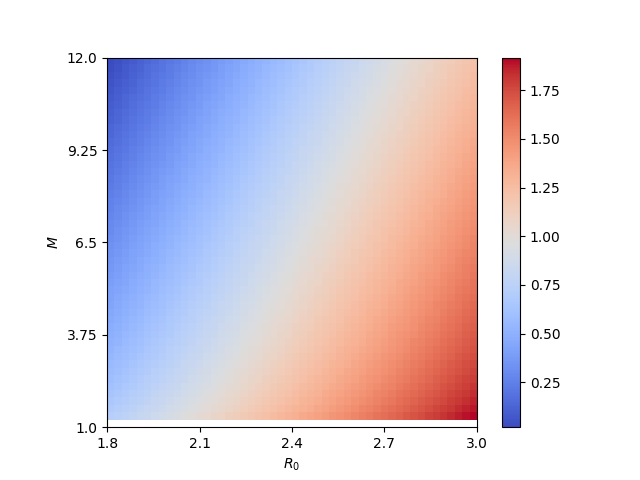}\label{fig_hmQ1}}
    \subfigure[$Q_2$]
    {\centering \includegraphics[width=0.45\linewidth]{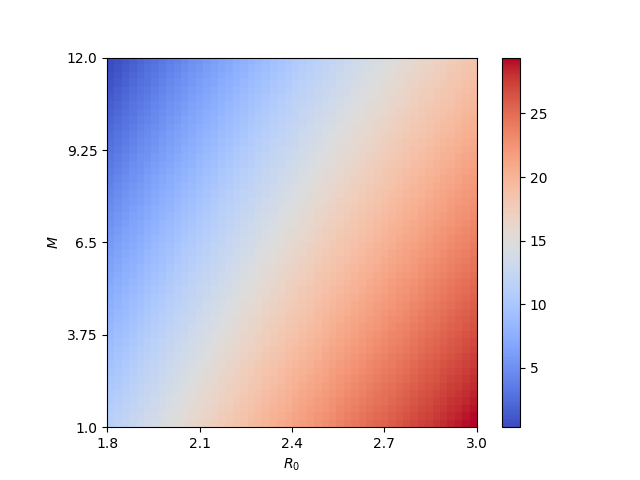}\label{fig_hmQ2}}
    \subfigure[$Q_3$]
    {\centering \includegraphics[width=0.45\linewidth]{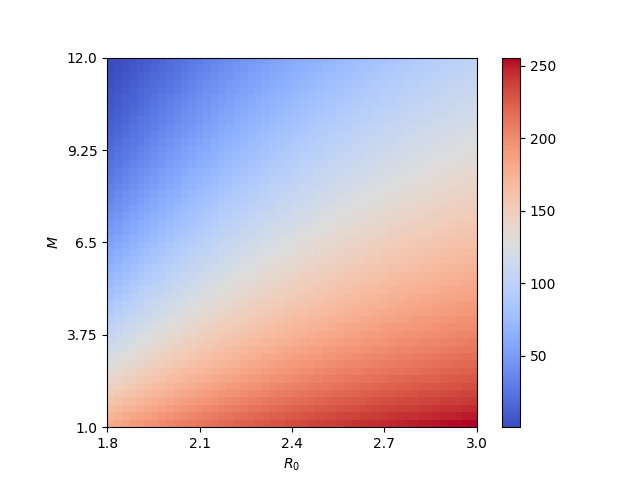}\label{fig_hmQ3}}
    \subfigure[$Q_4$]
    {\centering \includegraphics[width=0.45\linewidth]{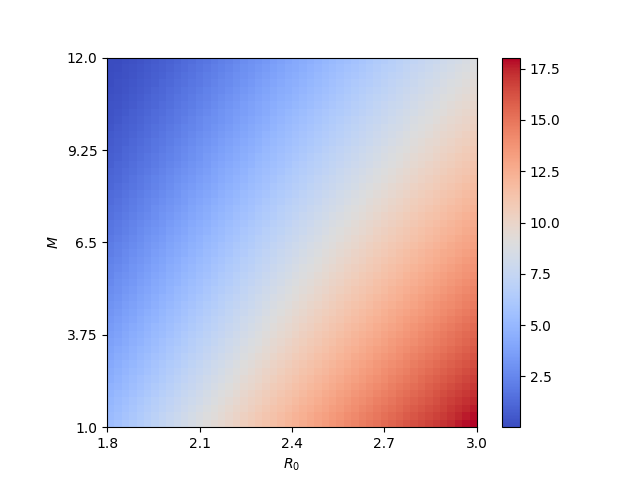}\label{fig_hmQ4}}
    \subfigure[$Q_5$]
    {\centering \includegraphics[width=0.45\linewidth]{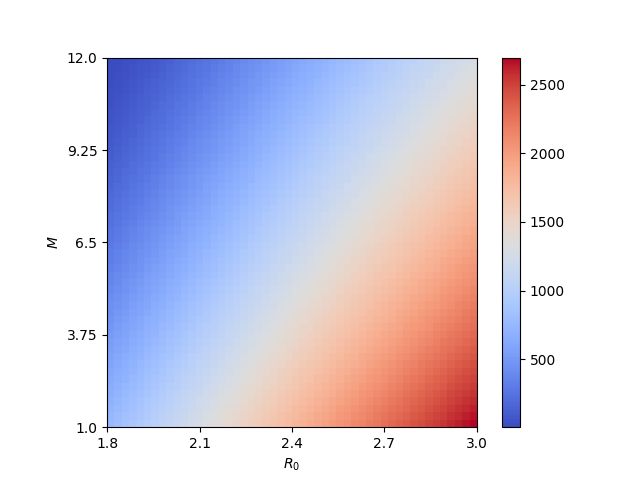}\label{fig_hmQ5}}\caption{
	Heat maps for the values of each quantifier for different values of $\mathcal{R}_0$ and $M$. The parameters considered for these simulations were $N=100$, $\gamma = 1/3$, $S(0) = N-1$, $I(0) = 1$, $R(0) =0$, $M \in [1,12]$ and $\mathcal{R}_0 \in [1.8, 3]$.} 
	\label{fig_hmQ}
\end{figure}

\begin{figure}[ht!] 
    \centering
    \subfigure[$Q_1$]
    {\centering \includegraphics[width=0.45\linewidth]{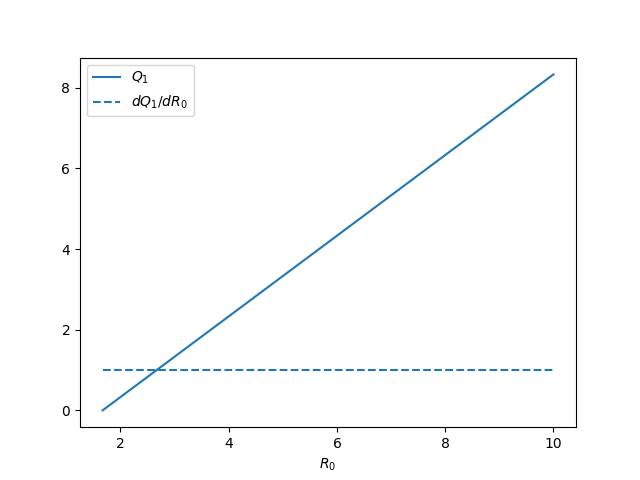}\label{fig_dQ1}}
    \subfigure[$Q_2$]
    {\centering \includegraphics[width=0.45\linewidth]{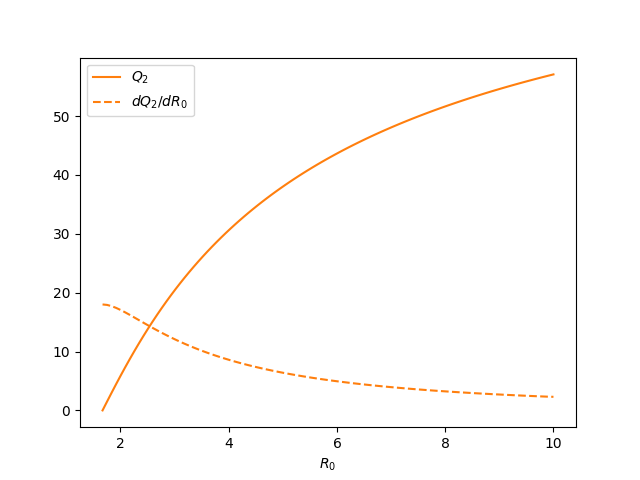}\label{fig_dQ2}}
    \subfigure[$Q_3$]
    {\centering \includegraphics[width=0.45\linewidth]{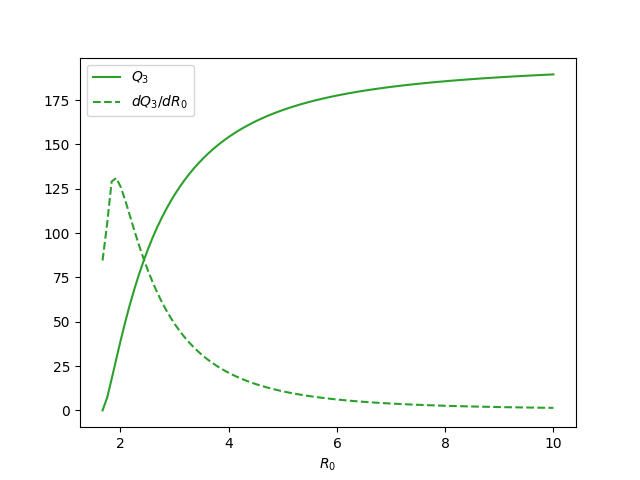}\label{fig_dQ3}}
    \subfigure[$Q_4$]
    {\centering \includegraphics[width=0.45\linewidth]{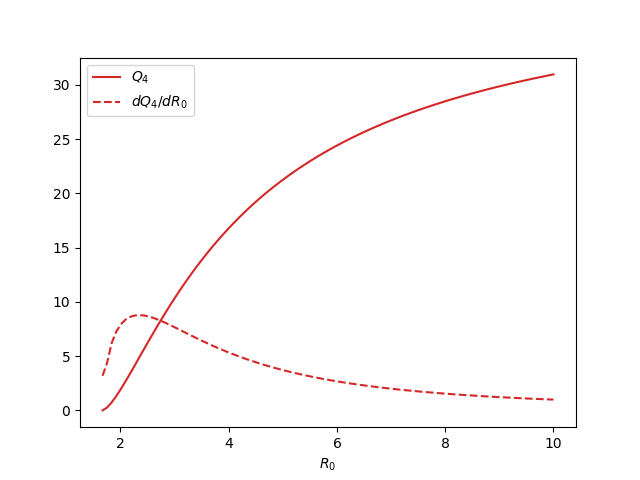}\label{fig_dQ4}}
    \subfigure[$Q_5$]
    {\centering \includegraphics[width=0.45\linewidth]{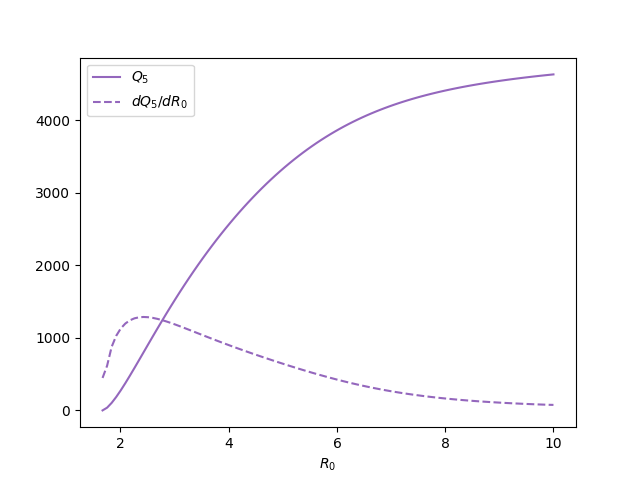}\label{fig_dQ5}}
	\caption{
    Values of the quantifiers $Q_1$ (blue), $Q_2$ (orange), $Q_3$ (green), $Q_4$ (red), and $Q_5$ (purple), in solid line, and their derivatives with respect to $\mathcal{R}_0$, in dashed line.
	The parameters considered were $N=100$, $\gamma = 1/3$, $S(0) = N-1$, $I(0) = 1$, $R(0) =0$, $M = 0.1\cdot N$ and $\mathcal{R}_0 \in [\mathcal{R}_0^*, \overline{\mathcal{R}_0}]$ where $\mathcal{R}_0^*$ is given by Equation~\eqref{eq.R0min} and  $\overline{\mathcal{R}_0} = 10$.}
	\label{fig_derivadasQ}
\end{figure}

Note that the basic reproduction number $\mathcal{R}_0$ and the epidemiological threshold $M$ play a fundamental role in the dynamics and, consequently, on the quantifiers. 
In order to illustrate the behaviors of the quantifiers with respect to these parameters, the Figure~\ref{fig_hmQ} shows heat maps indicating the intensity of each quantifier for different values of $\mathcal{R}_0$ and $M$.

At first glance, quantifiers behave similarly with respect to variations on $M$ and $\mathcal{R}_0$. As expected, we see low intensity for small $\mathcal{R}_0$ and large $M$, and a gradual increase on intensity as $\mathcal{R}_0$ increases or $M$ decreases. Maximum intensity is attained for the highest values of $\mathcal{R}_0$ and the lowest values of $M$. Note however the difference on scales for e each quantifier. 

A closer look at each quantifier will allow us to observe slightly different behaviors. Note for example that even for a low fixed $M$ (such as $ M = 3.75 $), $Q_3$ has a very high intensity even at low $\mathcal{R}_0$ values, while $Q_5$ will only considerably increase intensity for higher values of $\mathcal{R}_0$. 

 Figure~\ref{fig_derivadasQ} shows the quantifiers as functions of $\mathcal{R}_0$, and also presents their derivatives (in relation to $\mathcal{R}_0$).

With the exception of $Q_1$ which has a constant derivative, all other quantifiers loss sensitivity to changes (derivative goes to zero) for large values of $\mathcal R_0$. This sensitivity loss starts at different values of $\mathcal R_0$ for each quantifier and evolve also in different ways.

As all quantifiers have different value scales, to compare them simultaneously on a specific interval, we can normalize its values with respect to the maximum value, which is attained at the right extreme of the interval, since all quantifiers are increasing with respect to $\mathcal{R}_0$. To illustrate this kind of comparison, Figure~\ref{fig_Qnormalizado} presents the normalized values for all quantifiers and Figure~\ref{fig_QlogD} shows the logarithmic derivatives of the quantifiers with respect to $\mathcal{R}_0$, that is, the quotient between $\frac{dQ_i}{d\mathcal{R}_0}$ and $Q_i$, for $i=1,2,3,4$ or $5$. In this case, we can establish for example from Figure ~\ref{fig_Qnormalizado} that $Q_3$ approach its maximum value comparatively faster than the other quantifiers, and from Figure \ref{fig_QlogD} that $Q_5$ is slightly more sensitive for small values of $\mathcal R_0$ than the other quantifiers.

\begin{figure}[ht!] 
    \label{fig_Qagrupados}
    \centering
    \subfigure[Normalized Quantifiers]
    {\centering \includegraphics[width=0.45\linewidth]{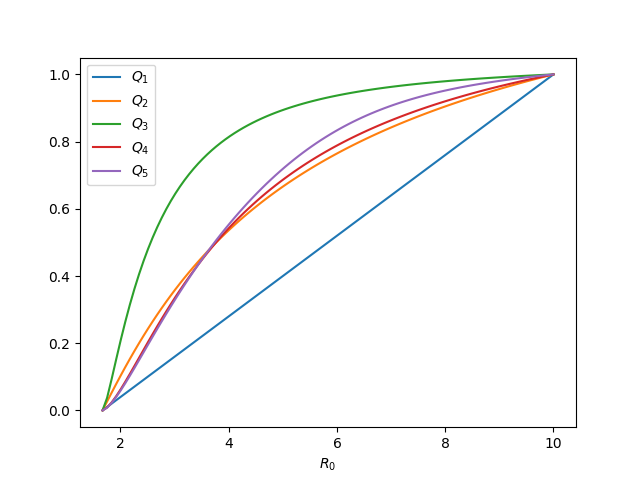}\label{fig_Qnormalizado}}
    \subfigure[Logarithmic derivatives]
    {\centering \includegraphics[width=0.45\linewidth]{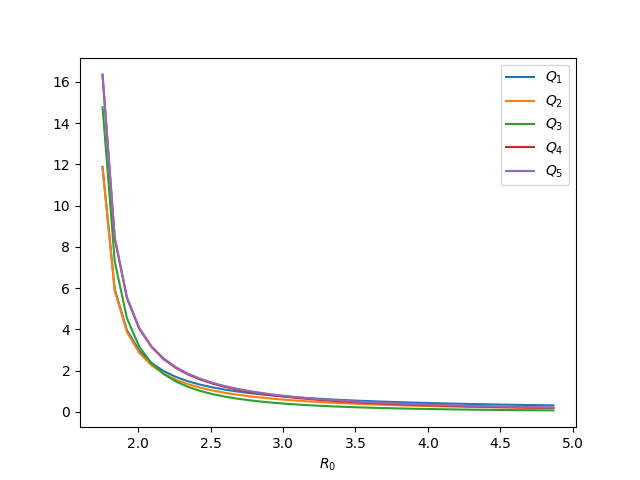}\label{fig_QlogD}}
	\caption{
	Comparison between quantifiers. 
    Figure (a) shows the normalized quantifiers as a function of $\mathcal{R}_0$. 
    Figure (b) shows logarithmic derivatives of the quantifiers, with respect to $\mathcal{R}_0$. 
	The parameters considered were $N=100$, $\gamma = 1/3$, $S(0) = N-1$, $I(0) = 1$, $R(0)=0$, $M=0.1\cdot N$,  $\mathcal{R}_0 \in [\mathcal{R}_0^*, 10]$ in (a) and $\mathcal{R}_0 \in [\mathcal{R}_0^*, 5]$  in (b) where $\mathcal{R}_0^*\approx 1.7$. is obtained by using Equation~\eqref{eq.R0min}.}
\end{figure}

\section{Final Comments} 
\label{sec.finalcomments}

The main contribution of this paper is the determination of sufficient and necessary conditions to ensure the infected curve remains below a given threshold (Proposition \ref{proposition2}) and the quantification of the threshold exceeding (Quantifiers $Q_i, i=1,\dots,5$) when this occurs. Some of the proposed quantifiers are measuring the impact explicitly on the epidemic peak, (quantifiers $Q_1$ and $Q_2$), while the others aim to achieve a global impact quantification, using some type of integration; being with respect to the parameter $t$ (Quantifier $Q_3$),  with respect to the parameter $u$ (Quantifier $Q_4$) or even using a line integral which does not depend on the curve parametrization (Quantifier $Q_5$). Future work should extend these results to more realistic epidemic models.

\appendix
\section{Lambert W function}
\label{Append}

The Lambert $W$ function is a multi-valued function corresponding to the inverse relation of the function $f(x) = xe^x$, so $W(x)$ gives solutions $u$ to the equation $ue^u=x$ {\it i.e.}  $W(x)$ satisfies that 

$$ W(x)e^{W(x)}=x.$$

For the real case, the Lambert $W$ function is defined only for $-\frac{1}{e}\leq x$. This and other properties related are summarized in the following lemma, enunciated here without proof. For proof details and additional properties of the Lambert $W$ function see~\cite{Corless1996}.

\begin{lemma}\label{Lambert1} 
For $x\in \mathbb R$, the Lambert $W$ function on $x$ is defined only for $-\frac{1}{e}\leq x$. If $-\frac{1}{e}< x <0$, then the equation $ye^y=x$ has two solution and therefore $W(x)$ has two possible values denoted by $W_{-1}(x)$ and $W_0(x)$. If  $0\leq x$ the equation has a unique solution denoted by $y=W_{0}(x)$.  In the point $x=-\frac{1}{e}$ the equation has a unique solution $W\left(-\frac{1}{e}\right)=-1$.
\end{lemma}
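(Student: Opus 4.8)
The plan is to carry out a complete elementary analysis of the real function $f(y) = y e^{y}$ on $\mathbb{R}$, since $W$ is by definition the inverse relation of $f$, and then read off all four assertions from the resulting monotonicity picture. First I would compute $f'(y) = (1+y)e^{y}$, which is negative for $y < -1$, zero at $y = -1$, and positive for $y > -1$; hence $f$ is strictly decreasing on $(-\infty, -1]$ and strictly increasing on $[-1, \infty)$, with global minimum $f(-1) = -1/e$. Next I would record the boundary behaviour: $f(0) = 0$, $f(y) \to +\infty$ as $y \to +\infty$, and $f(y) \to 0^{-}$ as $y \to -\infty$ (this last limit, together with the sign $f(y) < 0$ there, being the only computation requiring a word of justification, via l'H\^opital's rule or the fact that the exponential dominates any polynomial).

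Combining continuity with strict monotonicity on each branch, $f$ restricted to $(-\infty, -1]$ is a bijection onto $[-1/e, 0)$, and $f$ restricted to $[-1, \infty)$ is a bijection onto $[-1/e, \infty)$; in particular the range of $f$ equals $[-1/e, \infty)$, which already yields that $y e^{y} = x$ has no real solution when $x < -1/e$. For the remaining values of $x$ I would simply count preimages branch by branch: if $x = -1/e$ the only preimage is the minimiser $y = -1$; if $-1/e < x < 0$ there is exactly one preimage on the decreasing branch, in $(-\infty, -1)$, which I denote $W_{-1}(x)$, and exactly one on the increasing branch, in $(-1, 0)$, denoted $W_{0}(x)$, giving two solutions; if $x = 0$ then $y e^{y} = 0$ forces $y = 0$ since $e^{y}$ never vanishes, so the unique solution is $W_{0}(0) = 0$; and if $x > 0$ the value $x$ lies outside $[-1/e, 0)$, so the only preimage is on the increasing branch, in $(0, \infty)$, again unique. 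This reproduces precisely the statement, with $W_{0}$ denoting throughout the preimage in $[-1, \infty)$ and $W_{-1}$ the one in $(-\infty, -1)$.

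I do not expect a genuine obstacle: existence on each branch is the intermediate value theorem and uniqueness on each branch is strict monotonicity, both entirely standard. The one point deserving care is the half-open nature of the image $[-1/e, 0)$ of the decreasing branch, which is exactly what distinguishes the case $-1/e < x < 0$ (two solutions) from the cases $x = 0$ and $x > 0$ (one solution each), and it is precisely the limit $\lim_{y \to -\infty} y e^{y} = 0$ together with $f < 0$ on $(-\infty,0)$ that establishes it.
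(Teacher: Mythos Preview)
Your argument is correct and complete: the monotonicity analysis of $f(y)=ye^{y}$ via $f'(y)=(1+y)e^{y}$, together with the limits $f(y)\to 0^{-}$ as $y\to -\infty$ and $f(y)\to +\infty$ as $y\to +\infty$, gives exactly the branch structure claimed, and your case-by-case preimage count is accurate.

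There is nothing to compare against, however, because the paper does not prove this lemma at all: it is explicitly ``enunciated here without proof'' with a pointer to Corless \emph{et al.}\ for details. Your proposal therefore supplies strictly more than the paper does. The approach you take is in fact the standard one found in the cited reference, so in that sense your proof is not a different route but rather the expected elementary argument that the paper chose to omit.
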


The Lambert $W$ function can be used to solve equations involving natural logarithms as described in the following results.

\begin{lemma}\label{Lambert2} 
Let $a,b$ be real numbers with $a\not=0$. Consider the equation 

\begin{equation}
    \ln{u}=au+b,
    \label{equationU}
\end{equation}

for $u>0$. If there exists solutions, they can be expressed in terms of the Lambert $W$ function as 
\begin{equation}
    u=\frac{W(-ae^{b})}{-a}
\end{equation}
The Equation~\eqref{equationU} has no solutions if $1<ae^{b+1}$; it has two solution if $0<ae^{b+1}<1$ and has a unique solution if $ae^{b+1}=1$ or $ae^{b}<0$. 
\end{lemma}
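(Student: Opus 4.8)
The plan is to reduce Equation~\eqref{equationU} to the defining equation $we^w = x$ of the Lambert $W$ function by an elementary substitution, and then to read off existence and the number of solutions directly from Lemma~\ref{Lambert1}, distinguishing cases according to the sign of $a$ and the position of the argument $-ae^{b}$ relative to $-\frac1e$ and $0$.

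First I would exponentiate~\eqref{equationU} to obtain $u = e^{au+b} = e^{b}e^{au}$, hence $u\,e^{-au} = e^{b}$. Multiplying both sides by the nonzero number $-a$ and setting $w = -au$ turns this into $w\,e^{w} = -ae^{b}$, so that $w = W(-ae^{b})$ whenever $-ae^{b}$ lies in the domain of $W$; substituting back gives the claimed formula $u = W(-ae^{b})/(-a)$. Conversely, any $w$ with $w e^{w} = -ae^{b}$ yields, via $u = -w/a$, a real number satisfying $u e^{-au} = e^{b}$; to get a genuine solution of the logarithmic equation one needs in addition $u>0$, which I would check in each case below, so that the exponentiation step does not introduce spurious roots.

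Next I would carry out the case analysis. If $ae^{b}<0$, i.e.\ $a<0$, then $-ae^{b}>0$ and by Lemma~\ref{Lambert1} there is exactly one value $w = W_{0}(-ae^{b})>0$; since $-a>0$ we get $u = w/(-a)>0$, giving the unique solution in the case $ae^{b}<0$. If instead $a>0$, then $-ae^{b}<0$, and Lemma~\ref{Lambert1} applies on $[-\frac1e,0)$: the argument $-ae^{b}$ lies in this interval exactly when $ae^{b}\le \frac1e$, equivalently $ae^{b+1}\le 1$. Hence there are no solutions when $ae^{b+1}>1$; there is exactly one solution when $ae^{b+1}=1$, namely $w = W(-\tfrac1e) = -1$ and thus $u = 1/a>0$; and there are exactly two solutions when $0<ae^{b+1}<1$, namely $w = W_{0}(-ae^{b})$ and $w = W_{-1}(-ae^{b})$, both negative, so that $u = w/(-a)>0$ on either branch. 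Collecting these cases (noting that $a<0$ forces $ae^{b+1}<0$, so it falls outside the ranges $ae^{b+1}\ge 1$ and $0<ae^{b+1}<1$) reproduces exactly the stated classification.

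I expect the only delicate point to be the bookkeeping: keeping the sign of $a$, the branch of $W$ invoked, and the resulting sign of $u$ mutually consistent, so that each algebraic root of $w e^{w} = -ae^{b}$ corresponds to an honest positive solution of $\ln u = au + b$ and none is lost or spuriously gained. The Lambert-$W$ manipulation itself is routine once the substitution $w = -au$ is made.
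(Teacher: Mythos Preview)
Your argument is correct and follows essentially the same route as the paper: exponentiate, substitute $w=-au$ to reach $we^{w}=-ae^{b}$, invoke the Lambert $W$ function, and then read off the case distinctions from Lemma~\ref{Lambert1}. You are in fact slightly more careful than the paper, which omits the verification that every branch value yields $u>0$; your sign analysis of $w$ and $-a$ fills that gap cleanly.
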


\begin{proof} 
From Equation~\eqref{equationU} we have that

\begin{align*}
    \ln{u}&=au+b\\
    u&= e^b e^{au}\\
    -au e^{-au}&= -ae^b\\
    -a&u=W(-ae^b)\\
     u&=\frac{W(-ae^{b})}{-a}.
\end{align*}
        
Note that according to Lemma~\ref{Lambert1}, $W(-ae^{b})$ is not defined if $-ae^{b}<-\frac{1}{e}$ which is equivalent to $1<ae^{b+1}$, so the first affirmation is established. The rest of the conditions follows in a similar way from Lemma~\ref{Lambert1}. \end{proof}

\begin{lemma}\label{Lambert3} 
Let $a,b$ be real numbers with $b\not=0$. Consider the equation 
\begin{equation}
    v\ln{v}=av+b,
    \label{equationU2}
\end{equation}
for $v>0$. If there exists solutions, they can be expressed in terms of the Lambert $W$ function as 
\begin{equation}
    v=\frac{b}{W(be^{-a})}.
\end{equation}
The equation \eqref{equationU} has no solutions if $be^{-a+1}<-1$; it has two solution if $-1<be^{-a+1}<0$ and has a unique solution if $be^{-a+1}=-1$ or $0<be^{-a}$.
\end{lemma}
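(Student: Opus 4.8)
The plan is to reduce Equation~\eqref{equationU2} to the defining relation $ye^{y}=x$ of the Lambert $W$ function by means of the substitution $w=\ln v$, exactly as in the proof of Lemma~\ref{Lambert2}. Since $v\mapsto\ln v$ is a bijection from $(0,\infty)$ onto $\mathbb R$, solutions $v>0$ of~\eqref{equationU2} correspond bijectively to solutions $w\in\mathbb R$ of the transformed equation, so no solutions are gained or lost. Writing $v=e^{w}$, the equation $v\ln v=av+b$ becomes $we^{w}=ae^{w}+b$, i.e. $(w-a)e^{w}=b$; multiplying by $e^{-a}$ gives $(w-a)e^{w-a}=be^{-a}$, which by the definition of $W$ means $w-a=W(be^{-a})$ whenever the right-hand side is defined. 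Hence $w=a+W(be^{-a})$ and, using the identity $e^{W(z)}=z/W(z)$ (valid since $b\neq0$ forces $be^{-a}\neq0$ and thus $W(be^{-a})\neq0$),
\begin{equation*}
v=e^{w}=e^{a}e^{W(be^{-a})}=e^{a}\cdot\frac{be^{-a}}{W(be^{-a})}=\frac{b}{W(be^{-a})},
\end{equation*}
which is the asserted closed form.

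For the count of solutions I would simply read off Lemma~\ref{Lambert1} with argument $z=be^{-a}$. By that lemma $W(z)$ is undefined for $z<-1/e$, takes two values (the branches $W_{0}$ and $W_{-1}$) for $-1/e<z<0$, and takes a single value for $z=-1/e$ or $z\geq0$. Translating these conditions: $z<-1/e$ is $be^{-a+1}<-1$ (no solution); $-1/e<z<0$ is $-1<be^{-a+1}<0$ (two solutions); $z=-1/e$ is $be^{-a+1}=-1$ (one solution, with $W=-1$); and $z>0$ is $0<be^{-a}$ (one solution on the principal branch), the borderline case $z=0$ being excluded by the hypothesis $b\neq0$. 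Because the substitution above is a bijection, each admissible value of $W(be^{-a})$ yields exactly one $v>0$, so the number of solutions of~\eqref{equationU2} equals the number of admissible $W$-values, giving the three stated cases.

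The argument is essentially routine and parallels Lemma~\ref{Lambert2}; the only points requiring a little care are (i) verifying that the substitution $w=\ln v$ is genuinely reversible on $(0,\infty)$, so that the solution count transfers without correction, and (ii) checking that the closed form $v=b/W(be^{-a})$ is well defined precisely because $b\neq0$ prevents $W(be^{-a})=0$. I do not anticipate any substantive obstacle beyond this bookkeeping.
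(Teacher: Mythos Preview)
Your proof is correct. The paper takes a slightly different route: instead of substituting $w=\ln v$ and reducing directly to the defining relation of $W$, it sets $u=1/v$, rewrites $v\ln v=av+b$ as $\ln u=-bu-a$, and then invokes Lemma~\ref{Lambert2} as a black box (with coefficients $-b$ and $-a$) to read off both the closed form and the solution count. Your approach bypasses Lemma~\ref{Lambert2} entirely and appeals to Lemma~\ref{Lambert1} directly, which makes the argument self-contained but requires you to redo the case analysis rather than inheriting it. Either substitution works; the paper's version is a touch shorter precisely because the bookkeeping about branch counts has already been packaged into Lemma~\ref{Lambert2}, while yours is more transparent about where the Lambert $W$ relation actually enters.
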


\begin{proof}
From Equation~\eqref{equationU2} we have that

\begin{align*}
    v\ln{v}&=av+b\\
    -(-\ln v)&=a +\frac{b}{v}\\
    \ln\left(\frac{1}{v}\right)&=-a-\frac{b}{v}\\
    \ln u&=-bu-a,\\
    \end{align*}
with $u=\frac{1}{v}$ and the desired result follows from Lemma~\ref{Lambert2}.   \end{proof}
  


\bibliography{references}  

\end{document}